\newtheorem{theorem}{Theorem}[section]
\newtheorem{lemma}[theorem]{Lemma}
\newtheorem{proposition}[theorem]{Proposition}
\newtheorem{corollary}[theorem]{Corollary}
\newtheorem*{theerem}{Theorem}
\newtheorem{definition}[theorem]{Definition}
\newtheorem{remark}[theorem]{Remark}
\newcommand{\bth}{\begin{theorem}}
\renewcommand{\eth}{\end{theorem}}
\newcommand{\bpr}{\begin{proposition}}
\newcommand{\epr}{\end{proposition}}
\newcommand{\bde}{\begin{definition}}
\newcommand{\ede}{\end{definition}}
\newcommand{\blem}{\begin{lemma}}
\newcommand{\elem}{\end{lemma}}
\newcommand{\bco}{\begin{corollary}}
\newcommand{\eco}{\end{corollary}}
\newcommand{\prove}{\begin{proof}}
\newcommand{\done}{\end{proof}}
\newcommand{\ite}{\begin{itemize}}
\newcommand{\mize}{\end{itemize}}
\newcommand{\ben}{\begin{enumerate}}
\newcommand{\een}{\end{enumerate}}
\DeclareMathOperator{\st}{st}
\DeclareMathOperator{\Dim}{Dim}
\DeclareMathOperator{\val}{val}
\DeclareMathOperator{\first}{first}
\DeclareMathOperator{\last}{last}
\DeclareMathOperator{\LRM}{LRM}
\DeclareMathOperator{\des}{des}
\DeclareMathOperator{\ap}{ap}
\DeclareMathOperator{\cp}{cp}
\DeclareMathOperator{\fix}{fix}
\DeclareMathOperator{\inv}{inv}
\DeclareMathOperator{\maj}{maj}
\DeclareMathOperator{\pos}{pos}
\DeclareMathOperator{\block}{bl}
\DeclareMathOperator{\spread}{sp}
\DeclareMathOperator{\SB}{SB}
\DeclareMathOperator{\Av}{Av}
\title{Set Partition Patterns and the Dimension Index}
\author{Thomas Grubb\\\texttt{tgrubb@ucsd.edu}\\University of California, San Diego \and Frederick Rajasekaran\\\texttt{frajasek@ucsd.edu}\\University of California, San Diego}
\date{\today\\[10pt]
	\begin{flushleft}
	\small Key Words: Avoidance, dimension index, generating functions, inversions, patterns, permutations, set partitions 
	                                       \\[5pt]
	\small AMS subject classification (2010):  05A18, 05A19
	\end{flushleft}}
\begin{document}
\maketitle
\abstract{The notion of containment and avoidance provides a natural partial ordering on set partitions. Work of Sagan and of Goyt has led to enumerative results on avoidance classes of set partitions, which were refined by Dahlberg et al. through the use of combinatorial statistics. We continue this work by computing the distribution of the dimension index (a statistic arising from the supercharacter theory of finite groups) across certain avoidance classes of partitions. In doing so we obtain a novel connection between noncrossing partitions and $321$-avoiding permutations, as well as connections to many other combinatorial objects such as Motzkin and Fibonacci polynomials.}
\section{Introduction}
Given a finite set $S$, a \emph{set partition} of $S$ is a unordered collection of disjoint nonempty blocks $B_1,\dots,B_k$ such that 
$$
\bigcup_{j=1}^kB_j=S.
$$
We will write a set partition $\pi$ as 
$$
\pi=B_1/\dots/B_k,
$$
and use the notation $\pi\vdash 
S$ to mean $\pi$ is a partition of $S$. We will restrict our attention in this article to set partitions of $[n]=\{1,2,\dots,n\}$ for some positive integer $n$; in this case we will omit set braces in the blocks of a partition, and write the blocks in \emph{standard order}, meaning 
$$
\min (B_1)<\dots<\min(B_k).
$$
We let $\Pi_n$ denote the collection of set partitions of $[n]$; thus $\Pi_3$ consists of the partitions 
$$
123,1/23,12/3,13/2,\text{ and }1/2/3.
$$

Set partitions form a foundational topic in combinatorics; see the books of Mansour \cite{Man13} or Stanley \cite{Sta12} for a general reference. In addition to having a rich combinatorial history, the theory of set partitions arises in the study of stochastic processes \cite{Pit06}, algebras \cite{HR05}, Hopf algebras \cite{AM10}, and many other areas.

The combinatorics of set partitions have recently been enhanced through the use of \emph{patterns}. Given a set partition $\pi = B_1/\dots/B_k$ of $[n]$ and a subset $S\subset [n]$, let $\pi\cap S$ be the partition of $S$ given by taking the nonempty intersections of the form $B_i\cap S$, $i=1,2,\dots, k$. We \emph{standardize} $\pi\cap S$ to obtain a partition $\st(\pi\cap S)$ of $[|S|]$ by replacing the $i$th smallest entry in $\pi\cap S$ with $i$. For example, if $n=6$, $\pi = 12/345/6$, and $S= \{2,3,5\}$, then $\pi\cap S = 2/35$ and $\st(\pi\cap S) = 1/23$.

Given two set partitions $\sigma\vdash[k]$ and $\pi\vdash[n]$, we say that $\pi$ \emph{contains} $\sigma$ as a pattern if there exists a subset $S\subset [n]$ with $\st(\pi\cap S) = \sigma$. If no such subset exists, then $\pi$ \emph{avoids} $\sigma$. Continuing our previous example, $\pi = 12/345/6$ contains the pattern $\sigma = 1/23$, but $\pi$ avoids the pattern $\tau = 1/2/3/4$ because $\pi$ only contains $3$ blocks.

The theory of set partition patterns can be traced in part back to work of Kreweras \cite{Kre72}, and was developed more generally in work of Klazar \cite{Kla96},\cite{Kla00A},\cite{Kla00B}. A fundamental question in the area, in analog to the related question in permutation patterns, is to enumerate the number of partitions of $[n]$ which avoid a set of fixed patterns. Namely, given a collection of partitions $P$, let 
$$
\Pi_n(P) = \{\pi \in \Pi_n: \pi \text{ avoids every partition in }P\}.
$$
Work of Klazar and Marcus \cite{KM07} has given asymptotic formulae for the sizes of these sets, and exact enumerative results for small patterns have been provided by Sagan \cite{Sag10} and Goyt \cite{Goy08}.

Again in analogy with permutation patterns, work has been devoted to refining these enumerative results through the use of combinatorial statistics. Often the statistic of interest is related to ``four fundamental statistics'' of Wachs and White \cite{WW91}. This can be found in work of Simion \cite{Sim94}, Goyt and Sagan \cite{GS09}, Dahlberg et al. \cite{REUPartition}, Lin and Fu \cite{LF17}, and Acharyya, Czajkowski, and Williams \cite{ACW20}.

The purpose of this paper is to continue the above work using a statistic arising from the supercharacter theory of finite groups. Building on work of Andr{\'e} \cite{And95}, \cite{And99}, \cite{And02} and Yan \cite{Yan01}, Diaconis and Isaacs have commenced a study of the representation theory of finite algebra groups through the use of \emph{supercharacters} and \emph{superclasses} \cite{DI08}. We will not give an introduction to this theory here, but instead will simply say that often such supercharacters can be indexed by set partitions with additional data; in particular, the \emph{dimension index} of such a partition $\pi=B_1/\dots/B_k$, given as a sum 
$$
\dim(\pi) = \sum_{i=1}^k(\max(B_i)-\min(B_i)+1),
$$
returns algebraic data regarding the corresponding supercharacter. 

As a result of this theory, there has been recent interest in the combinatorics of the dimension index on set partitions. For instance, Chern, Diaconis, Kane, and Rhoades have shown that this statistic satisfies an asymptotic central limit theorem \cite{CDKR15}, and that the average of this statistic (taken over $\Pi_n$) can be expressed quite cleanly in terms of the Bell numbers \cite{CDKR14}.

The purpose of this work is to combine the study of the dimension index and set partition patterns. In doing so, we give numerous refinements of enumerative results, and also find connections to other combinatorial objects. As a sample of one such connection, we are able to give a quick alternative proof of Theorem 8.4 of \cite{CEKS13} regarding $321$-avoiding permutations (definitions for those unfamiliar may be found in Section 5):
\begin{theerem}
Let $I_n(q,t,x)$ be the generating function for left-to-right maxima, inversions, and fixed points, taken over the $321$-avoiding permutations of length $n$:
$$
I_n(q,t,x) = \sum_{\pi \in Av_n(321)}q^{\inv(\pi)}t^{\LRM(\pi)}x^{\fix(\pi)}.
$$
Then $I_0(q,t,x) = 1$ and, for $n\geq 1$, 
$$
I_n(q,t,x) = tx I_{n-1}(q,t,x) + \sum_{j=2}^nq^{j-1}I_{j-2}(q,t,1)\big(I_{n-j+1}(q,t,x) - t(x-1)I_{n-j}(q,t,x)\big).
$$
\end{theerem}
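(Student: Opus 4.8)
The plan is to transport the statement to noncrossing partitions and argue by a first-return decomposition. Under the correspondence between $\Av_n(321)$ and the set $NC_n$ of noncrossing partitions of $[n]$ set up above, the statistics $\inv$, $\LRM$, and $\fix$ become, respectively, the total spread $\dim(P)-\block(P)=\sum_B(\max B-\min B)$, the number of blocks $\block(P)$, and the number $s(P)$ of \emph{exterior} singletons---singleton blocks $\{i\}$ for which no block $B$ satisfies $\min B<i<\max B$. Thus
$$I_n(q,t,x)=\sum_{P\in NC_n}q^{\dim(P)-\block(P)}\,t^{\block(P)}\,x^{s(P)},$$
and it suffices to prove the recursion for this sum.

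First I would decompose $NC_n$ according to the block $B$ containing the element $1$. If $B=\{1\}$, then $\{1\}$ is an exterior singleton (nothing can enclose the minimum), so it contributes $tx$; deleting it and relabeling $\{2,\dots,n\}$ gives a statistic-preserving bijection onto $NC_{n-1}$, accounting for the term $txI_{n-1}$. Otherwise set $j=\min(B\setminus\{1\})$, so $2\le j\le n$. Noncrossingness forces the arc joining $1$ to $j$ to isolate $\{2,\dots,j-1\}$: any block meeting this interval is contained in it, so $\{2,\dots,j-1\}$ supports an independent noncrossing partition $P_1\cong NC_{j-2}$, while $\{j,\dots,n\}$ supports $P_2\cong NC_{n-j+1}$ whose minimal block is $B\cap\{j,\dots,n\}$. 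The assignment $P\mapsto(P_1,P_2)$ is a bijection.

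The tallies for $q$ and $t$ are then routine. Blocks are additive, $\block(P)=\block(P_1)+\block(P_2)$, since adjoining $1$ to the minimal block of $P_2$ creates no new block. Extending that block's span down to $1$ raises the total spread by exactly $j-1$, giving
$$\dim(P)-\block(P)=(j-1)+\big(\dim(P_1)-\block(P_1)\big)+\big(\dim(P_2)-\block(P_2)\big),$$
hence the factor $q^{j-1}$. Moreover every element of $\{2,\dots,j-1\}$ lies strictly inside the span of $B$, so no singleton of $P_1$ is exterior in $P$; therefore $P_1$ is enumerated with $x$ switched off, i.e.\ by $I_{j-2}(q,t,1)$.

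The one delicate point---and the main obstacle---is the exterior-singleton count coming from $P_2$. For $i\in\{j+1,\dots,n\}$, being enclosed by $B$ is the same as being enclosed within $P_2$, so those contributions match; the only discrepancy is at the minimum $j$. Precisely when $B=\{1,j\}$, the element $j$ is a singleton (hence exterior) in $P_2$ but lies in the two-element block $B$ of $P$, so its factor of $x$ must be removed. The generating function for $NC_m$ with the exterior-singleton status of its minimum suppressed is exactly $I_m(q,t,x)-t(x-1)I_{m-1}(q,t,x)$: splitting $NC_m$ on whether $\{1\}$ is a singleton, the singleton case contributes $txI_{m-1}$, which one replaces by $tI_{m-1}$. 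Taking $m=n-j+1$ yields the second factor, and summing over $j$ gives the stated recursion. I expect the bulk of the work to be justifying this single boundary correction and confirming that the statistic dictionary is exactly as above; the spread and block computations are mechanical.
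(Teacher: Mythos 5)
Your proposal is correct and is essentially the paper's own argument translated from RGF language into the arc/partition picture: your split on the block $B$ containing $1$, with $j=\min(B\setminus\{1\})$, is exactly the paper's decomposition of $R_n(13/24)$ into $X$ and the sets $Y_j$ (indexed by the second occurrence of the letter $1$), your ``exterior singletons'' coincide with the paper's checkpoints, and your boundary correction $I_m(q,t,x)-t(x-1)I_{m-1}(q,t,x)$ for suppressing the minimum's singleton status is the paper's Inclusion-Exclusion step over the set $V_k$ of words with a unique $1$. No gaps; the spread, block, and singleton bookkeeping you call mechanical is carried out the same way in the paper.
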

Of note is that the proof of this theorem in \cite{CEKS13} requires algebraic manipulation of continued fractions, whereas our proof of this theorem is purely combinatorial. It relies only on a Catalan recursion and the Inclusion-Exclusion Principle.

To aid our study, we introduce the following generating functions: for a set of partitions $P$ and a variable $q$, we define
$$
\Dim_n(P;q) = \sum_{\pi\in \Pi_n(P)}q^{\dim(\pi)}.
$$
In fact, it will be simpler (and more interesting) to use two related statistics. Namely, for $\pi = B_1/\dots/B_k$ a partition of $[n]$, define the \emph{spread} and \emph{block} statistics by 
\begin{align*}
\spread(\pi) &= \sum_{i=1}^k (\max(B_i)-\min(B_i))\\
\block(\pi) &= k,
\end{align*}
so that $\dim(\pi) = \spread(\pi)+\block(\pi)$. In particular, we will examine the joint distributions of spread and block by calculating the joint generating function
$$
\SB_n(P;q,t) = \sum_{\pi\in \Pi_n(P)}q^{\spread(\pi)}t^{\block(\pi)},
$$
from which we can obtain the desired information on $\Dim_n(P;q)$ by equating the variables $q$ and $t$.

The outline of this paper is as follows. In Section 2, we start by recalling basic facts on set partitions which will be useful in our proofs, such as the connection between partitions and restricted growth functions. We then use this to study partitions which avoid a single pattern of length 3. Section 3 is devoted to partitions avoiding the pattern $13/24$; these are the so called \emph{noncrossing partitions}. Section 4 is devoted to avoidance of multiple patterns. We end by connecting noncrossing partitions to 321-avoiding permutations in Section 5, and with ideas for future work in Section 6.

\textbf{Acknowledgements:} Much of this work was aided by computations done in SageMath \cite{Sage} combined with use of the Online Encyclopedia of Integer Sequences \cite{OEIS}; we thank the contributors to both projects. Both authors would like to gratefully acknowledge financial support from NSF grant DMS-1849173. Additionally, the first listed author would like to acknowledge financial support from NSF Research Training Group grant DMS-1502651.
\section{Background and Introductory Calculations}
\subsection{Preliminary Notions}
We start this section by recalling several preliminary facts which will be useful in the study of set partition patterns. The first notion is that of a \emph{restricted growth function}, or \emph{RGF} for short; it allows us to frame containment questions in terms of words and subwords, which simplifies many arguments. 

A restricted growth function is a sequence $w = a_{1}a_{2}\dots a_{n}$ of positive integers such that $a_1 = 1$ and, for $i \geq 2$,
\begin{equation*}
a_i \leq 1 + \max\{a_1, \dots, a_{i-1}\}.
\end{equation*}

Let $R_n$ denote the set of length $n$ RGFs. It is straightforward to show that the following gives a bijection between $\Pi_n$ and $R_n$. Given $\pi = B_1/\dots/B_k\in \Pi_n$ written in standard order, we map $\pi$ to the RGF $w(\pi) = w_1\dots w_n$ with
$$
w_i = j
$$
if $i\in B_j$.
For example, the partition $\pi = 14/25/378/6 \vdash [8]$ maps to the RGF $12312433$ under this bijection.

Mapping partitions to their associated RGFs often provides a useful characterization of avoidance classes of set partitions. For a set of partitions $P$, let 
$$
R_{n}(P) = \{w(\pi) \in R_{n} : \pi \in \Pi_n(P)\}.
$$
Below we collect several results from Sagan's article \cite{Sag10}, which provide characterizations of $R_n(P)$ where $P$ consists of a single pattern of length $3$.

\bth[\cite{Sag10}]
We have the following characterizations.
\begin{enumerate}
    \item $R_n(1/2/3) = \{w \in R_n : \text{w consists of only 1s and 2s}\}.$
    \item $R_n(1/23) = \left\{w \in R_n : \begin{array}{c}\text{w is obtained by inserting a single 1 into a word of} \\ \text{the form $1^l23\dots m$ for some $l \geq 0$ and $m \geq 1$}\end{array}\right\}$.
    \item $R_n(13/2) = \{w \in R_n : \text{w is weakly increasing}\}$.
    \item $R_n(12/3) = \{w \in R_n : \text{w has initial run $1\dots m$ and $a_{m+1} = \dots = a_n \leq m$ }\}$.
    \item $R_n(123) = \{w \in R_n : \text{w has no element repeated more than twice}\}$.
\end{enumerate}
\eth

With this theorem in hand, we now explain how to read the block, spread, and dimension statistics of a partition from its RGF. To do so it will help to introduce the following notation. Given an RGF $w$ and a letter $l$ of $w$, let $\first(l)$ and $\last(l)$ be the indices of the first and last occurrence of $l$ in $w$, respectively. For example, if $w=11231$, then $\first(1) =1$, $\last(1) = 5$, and $\first(3) = \last(3) = 4$.
\blem
Let $\pi \in \Pi_n$ with $w=w(\pi)$. Then

\begin{enumerate}
    \item $\block(\pi) = \max(w)$,
    \item $\spread(\pi) = \sum\limits_{i = 1}^{\max(w)} (\last(i)- \first(i))$,
    \item $\dim(\pi) = \sum\limits_{i = 1}^{\max(w)} (\last(i) - \first(i)+1)$.
\end{enumerate}
\elem

\begin{proof}
The first equality follows directly from the bijection between $R_n$ and $\Pi_n$.
The spread statistic is given by the blockwise sum of the difference between the maximum and minimum elements in each block. To track the maximum and minimum elements in a block $B_{i}$, we must find the first occurrence of $i$ in the RGF and the last occurrence of $i$ in the $RGF$, and take their difference. Hence, 
$$
\spread(w) = \sum_{i = 1}^{\max(w)} (\text{last$(i)$ $-$ first$(i)$}).
$$
The last statement follows from $\dim(w) = \spread(w) + \block(w)$.
\end{proof}

For example, below we display the partitions of length three together with their corresponding RGFs and block, spread, and dimension statistics.
\begin{center}
    \begin{tabular}{|c|c|c|c|c|}
    \hline
    $\pi$ & $w(\pi)$ & $\block(\pi)$ & $\spread(\pi)$ & $\dim(\pi)$ \\
    \hline
    $1/2/3$ & $123 $ & $3$ & $0$ & $3$ \\
    \hline
    $1/23$ & $122$ & $2$ & $1$ & $3$ \\
    \hline
    $12/3$ & $112$ & $2$ & $1$ & $3$ \\
    \hline
    $13/2$ & $121$ & $2$ & $2$ & $4$ \\
    \hline
    $123$ & $111$ & $1$ & $2$ & $3$ \\
    \hline
    \end{tabular}
\end{center}

In the remainder of this section we will combine Theorem 2.1 and Lemma 2.2 to study $\SB_n(P;q,t)$ and $\Dim(P;q)$ for patterns of length three. To simplify notation somewhat we will use $[n]_q$ to denote the $q$-analogue of $n$, 
$$
[n]_q = 1+q+q^2+ \dots +q^{n-1}.
$$
\subsection{Patterns of Length 3}
We start by dealing with the pattern $12/3$. We obtain a $(q,t)$-analog of the well known identity $1+2+\dots+(n-1) = \binom{n}{2}$.
\bth
We have
    $$\SB_{n}(12/3;q,t) = t^n+\sum_{i=1}^{n-1}t^{i}q^{n-i}[i]_q$$ and $$\Dim(12/3;q) = q^n\left(1+\sum_{i=1}^{n-1}[i]_q\right).$$
\eth

\begin{proof}
    From Theorem 2.1, every RGF in $R_n(12/3)$ is of the form $12...mk...k$ where $1 \leq k \leq m$. From this, we can then characterize every RGF first by its maximum element, then by the value of the constant string at the end. 
    
    If $w$ is strictly increasing, $w=12\dots n$, then the trailing constant string is empty, and hence $\block(w) = n,$ $\spread(w) = 0$. Alternatively, suppose an RGF has maximum value $m$, and a string of length $n-m$ and value $k$ at the end. Then it is clear that $\block(\pi) = m$ and $\spread(w) = n - k$. We have
\begin{equation*}
\sum_{w\in R_n(12/3)}q^{\spread(w)}t^{\block(w)}=t^n+\sum_{m=1}^{n-1}\sum_{k=1}^m t^{m}q^{n-k}
\end{equation*}
and simplifying gives the desired result.
\end{proof}
Next we will examine the pattern $1/23$. Partitions avoiding this pattern are in bijection with those avoiding the partition $12/3$; we will exhibit this bijection explicitly, show that it respects the statistics of interest, and then apply Theorem 2.3 to prove the following.
\bth
We have
    $$\SB_{n}(1/23;q,t) = t^n+\sum_{i=1}^{n-1}t^{i}q^{n-i}[i]_q$$ and $$\Dim(1/23;q) = q^n\left(1+\sum_{i=1}^{n-1}[i]_q\right).$$
\eth

\begin{proof}
As discussed previously, we will exhibit a statistic preserving bijection $\tau:R_n(1/23) \to R_n(12/3)$. To do this, let $w\in R_n(1/23)$. If $w=12\dots n$, we set $\tau(w) = w$. Otherwise, by Theorem 2.1, $w$ is obtained by inserting a single $1$ into a word of the form $1^l23\dots m$. Suppose the additional $1$ is inserted at index $l+k$, for $1\leq k\leq n-l$. We map $w$ to the word $12\dots m(m-k+1)\dots(m-k+1)$.

Since $m\geq m-k+1$ we know $\tau(w) \in R_n(12/3)$, and hence our map is well defined. It is also straightforward to invert $\tau$, so it suffices to show $\tau$ preserves spread and block. Since $\tau$ does not alter the maxima of $w$ we know $\block(w) = \block(\tau(w))$. Finally, for $w\in R_n(1/23)$, the only letter which contributes to spread is the letter $1$. Alternatively, if $w\in R_n(12/3)$ then the only letter which contributes to $\spread(w)$ is the letter appearing in the trailing constant string. One readily verifies that $\tau$ respects these contributions, verifying our theorem. 
\end{proof}

Next we move to the pattern $13/2$. Any RGF in $R_n(13/2)$ is weakly increasing, which makes this computation suitable for recursion. Along the way we obtain a $(q,t)$-analog of the identiy $1+1+2+4+\dots + 2^{n-1} = 2^n$.
\bth
We have $\SB_n(13/2;q,t) = \Dim_n(13/2;q) =1$ and, for $n\geq1$,
    $$\SB_n(13/2;q,t) = (q+t)^{n-1}t$$ 
    and 
    $$\Dim_n(13/2;q) = 2^{n-1}q^{n}$$
\eth

\begin{proof}
Let $w\in R_n(13/2)$ for $n\geq1$. By Theorem 2.1, $w$ is weakly increasing. Suppose $w$ ends in a constant string of length $l$, for $1\leq l \leq n$. Removing this string from $w$ yields an RGF $v$ of length $n-l$ which is also weakly increasing, and hence contained in $R_{n-l}(13/2)$. This process yields a bijection 
$$
R_n(13/2) \to \coprod_{k=0}^{n-1}R_k(13/2). 
$$
Moreover, if $v\in R_k(13/2),$ then mapping $v$ to the word $w$ obtained by appending $n-k$ copies of $\max(v)+1$ to $v$ has the following effect:
\begin{align*}
    \block(w)&= 1+\block(v)\\
    \spread(w)&=n-k-1 + \spread(v). 
\end{align*}
We obtain 
$$
\SB_n(13/2;q,t) = \sum_{k=0}^{n-1}q^{n-k-1}t\SB_{k}(13/2;q,t),
$$
which simplifies to the formulas in the theorem statement via standard generatingfunctionology.
\end{proof}

The next pattern we examine is $1/2/3$. Despite the fact that the partitions avoiding $1/2/3$ are simply those whose associated RGFs only use the letters $1$ and $2$, it is quite difficult to get a clean formula for $SB_n(1/2/3;q,t)$. We include these results mainly for completeness.  

\begin{theorem}
For $n\geq 2$ we have
\begin{align*}
    \SB_n(1/2/3;q,t)&=q^{n-1}t + t^2\left((n-2)q^{n-1}+(n-1)q^{n-2}+\sum_{i=2}^{n-2}\sum_{j=i+1}^{n-1}2^{j-i-1}q^{j-i}(1+q^{n-1})\right),\\
    \Dim_n(1/2/3;q)&= (n-2)q^{n+1}+nq^n+\sum_{i=2}^{n-2}\sum_{j=i+1}^{n-1}2^{j-i-1}q^{j-i}(1+q^{n-1}).
\end{align*}
\end{theorem}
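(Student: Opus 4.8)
The plan is to read both statistics off the associated RGF using Theorem 2.1 and Lemma 2.2. By Theorem 2.1(1), $R_n(1/2/3)$ is exactly the set of words $w=w_1\cdots w_n\in\{1,2\}^n$ with $w_1=1$; every such word automatically satisfies the RGF condition (the bound $1+\max$ is always at least $2$), so there are $2^{n-1}$ of them. By Lemma 2.2, $\block(w)=\max(w)\in\{1,2\}$, which immediately splits $\SB_n$ into a $t^1$ part and a $t^2$ part. The only word with $\max(w)=1$ is $1^n$, which has $\spread=n-1$ and produces the isolated summand $q^{n-1}t$. Every other word contains a $2$, has $\block=2$, and hence carries the factor $t^2$; the theorem therefore reduces to computing $\sum_w q^{\spread(w)}$ over the words containing at least one $2$, and then reading off $\Dim_n$ by the substitution $t=q$ (valid since $\dim=\spread+\block$).

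For a word $w$ containing a $2$ we have $\first(1)=1$, so Lemma 2.2 gives $\spread(w)=(\last(1)-1)+(\last(2)-\first(2))$. I would set $i=\first(2)$ and observe that the positions before $i$ are forced to be $1$, the positions strictly between the first and last $2$ are free, and the positions after the last $2$ are forced to be $1$. The natural dichotomy is on the final letter. If $w_n=1$, then $\last(1)=n$ and $\last(2)=j\le n-1$, so $\spread=(n-1)+(j-i)$ is pinned down by the pair $(i,j)$, and the number of words with these first/last-$2$ positions is $2^{\,j-i-1}$ when $i<j$ and $1$ when $i=j$. If instead $w_n=2$, then $\last(2)=n$ but $\last(1)$ is no longer determined by $i$, so I would reparametrize this case by $i=\first(2)$ and $\ell=\last(1)\le n-1$, giving $\spread=(\ell-1)+(n-i)$ with $2^{\,\ell-i-1}$ choices for the free interior.

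Collecting terms sorts the words into four families: the single-$2$ words with $w_n=1$ (indices $i=j$, $2\le i\le n-1$) give $(n-2)q^{n-1}$; the staircase words $1^{\,i-1}2^{\,n-i+1}$ (the $w_n=2$, $\ell=i-1$ case, $2\le i\le n$) all have $\spread=n-2$ and give $(n-1)q^{n-2}$; and the two remaining families, each indexed by a pair $2\le i<j\le n-1$ with a free interior, contribute $\sum_{2\le i<j\le n-1}2^{\,j-i-1}q^{(n-1)+(j-i)}$ apiece. Summing these geometric series, simplifying to the stated double-sum form, and substituting $t=q$ then yields the expressions for $\SB_n$ and $\Dim_n$. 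The main obstacle is precisely the $w_n=2$ case: once the word ends in $2$ the last occurrence of $1$ floats, so the clean $(\first(2),\last(2))$ parametrization breaks down and must be replaced, and keeping the powers of $q$ and of $2$ aligned across the two parametrizations—without double-counting the degenerate staircase words—is where the bookkeeping is delicate. I would verify the final closed form against direct enumeration for $n=3$ and $n=4$ to catch off-by-one errors in the exponents.
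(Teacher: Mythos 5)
Your decomposition is exactly the paper's: view $R_n(1/2/3)$ as the binary words in $\{1,2\}^n$ beginning with $1$, split on the last letter, parametrize the $w_n=1$ words by $(\first(2),\last(2))$ and the $w_n=2$ words by $(\first(2),\last(1))$, and handle $1^n$, the single-$2$ words, and the staircases $1^l2^{n-l}$ separately. Your counts and exponents agree with the paper's proof line for line: the $w_n=1$ crossing family contributes $\sum_{i<j}2^{j-i-1}q^{(n-1)+(j-i)}$, and the $w_n=2$ family contributes $\sum_{i<j}2^{j-i-1}q^{n+j-1-i}$, the same exponent $n-1+j-i$.

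The one step that fails is your final sentence, ``simplifying to the stated double-sum form'': this cannot be done, because the stated form is not what your (correct) computation produces. Your two families together give $\sum_{2\le i<j\le n-1}2^{j-i}q^{\,n-1+j-i}$, i.e.\ each pair $(i,j)$ carries $2^{j-i-1}q^{j-i}\cdot 2q^{n-1}$, whereas the theorem's display has $2^{j-i-1}q^{j-i}(1+q^{n-1})$ --- a factor $q^{n-1}$ has been dropped from one of the two terms in the statement. The paper's own proof displays agree with you, not with its theorem. Your proposed $n=4$ sanity check exposes this: direct enumeration of the eight words gives $\SB_4 = q^3t + t^2(3q^2+2q^3+2q^4)$, matching your sum, while the stated formula yields $q^3t + t^2(3q^2+2q^3+q+q^4)$. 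The same defect propagates to the $\Dim_n$ line, which is additionally missing the factor $q^2$ that the substitution $t=q$ places on the double sum; indeed a term $q^{j-i}$ with $j-i\le n-3$ cannot occur in $\Dim_n$ at all, since $\dim(\pi)\ge n$ for every $\pi\vdash[n]$. So your derivation is right and essentially identical to the paper's proof; the obstruction lies in the theorem statement itself, and your instinct to verify at $n=3,4$ is precisely what catches it.
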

\begin{proof}
To prove this theorem we partition $R_n(1/2/3)$ into several sets, and compute the distribution of spread and block over these individual sets. First we examine words in $R_n(1/2/3)$ which end with the letter $1$. We iterate over the position of the first and last $2$ in $w$ (dealing with the case $w=11\dots 1$ separately). If the first $2$ in $w$ has position $i$ and the last $2$ in $w$ has position $j$, then 
$$
\spread(w) = n-1+ j-i.
$$
If $i=j$ then such a $w$ is necessarily unique; if $i<j$ then there are $2^{j-i-1}$ words of this form, coming from the choice of letters between index $i$ and $j$. Thus
$$
\sum_{\substack{w\in R_n(1/2/3)\\w_n = 1}}q^{\spread(w)}t^{\block(w)}= q^{n-1}t+q^{n-1}t^2\left((n-2)+\sum_{i=2}^{n-2}\sum_{j=i+1}^{n-1}2^{j-i-1}q^{j-i}\right).
$$
Now suppose $w$ ends in the letter $2$. If $w$ is of the form $1^l2^{n-l}$, then $\spread(w) = n-2$. If $w$ is not of this form, then the first $2$ in $w$ precedes the last $1$. Iterating over the possible indices of these values gives $$
\sum_{\substack{w\in R_n(1/2/3)\\w_n = 2}}q^{\spread(w)}t^{\block(w)}= t^2\left((n-1)q^{n-2}+\sum_{i=2}^{n-2}\sum_{j=i+1}^{n-1}2^{j-i-1}q^{n+j-1-i}\right).
$$
Summing over all possible cases gives the desired result.
\end{proof}
Numerical calculations seem to show that the coefficients of $\Dim_n(1/2/3;q)$ are given by OEIS sequence A120933, and more generally that the coefficients of $\SB_n(1/2/3;q,t)$ are closely related to OEIS A296612; perhaps one could find a cleaner statement for our result by making these relations explicit.

The last result of this section deal with the pattern $123$. RGFs of partitions avoiding $123$ will contain no element more than twice. This restriction is much less rigid than the ones seen previously, which makes working with these partitions much more difficult. Accordingly, we only provide partial information on the individual statistics over this class. Namely, our theorem describes the words in $R_n(123)$ which maximize the spread statistic.

\bth
Let $w$ be an RGF in $R_n(123)$ which maximizes the spread statistic. Then
$$
\spread(w) = \left\lfloor\tfrac{n}{2}\right\rfloor\left\lceil\tfrac{n}{2}\right\rceil
$$
and 
$$
\block(w) = \lceil\tfrac{n}{2}\rceil.
$$
Moreover, $w$ is of the form $12 \dots \lceil\frac{n}{2}\rceil \sigma$, where $\sigma$ is a permutation of the set $\{1, 2, \dots, \lfloor\frac{n}{2}\rfloor\}$.
\eth

For example, let $n=11$. The RGF $w  = 12345653142$ is of the above form, its spread is $30 = 5 \cdot 6$, and its block is 6.
\prove
We prove this theorem in three steps. First, we show that if $u = 12 \dots \lceil\frac{n}{2}\rceil \sigma$ and $v = 12 \dots \lceil\frac{n}{2}\rceil \tau$ where $\sigma$ and $\tau$ are two different permutations of $\{1, 2, \dots, \lfloor\frac{n}{2}\rfloor\}$, then $\spread(u) = \spread(v)$. Then, we show that this construction indeed gives the maximum spread in $R_n(123)$. Finally, we calculate the spread of these spread-maximizing RGFs. The fact that $\block(w) = \lceil\frac{n}{2}\rceil$ follows directly from the strictly increasing sequence $123 \dots \lceil\frac{n}{2}\rceil$ at the start of $w$. 

Let $u$ and $v$ be as above. We show that $\spread(u) - \spread(v) = 0$. By definition, 
\begin{align*}
    \spread(u) - \spread(v) = \sum_{i = 1}^{\max(u)} (\text{last$(i)$ $-$ first$(i)$}) - \sum_{j = 1}^{\max(v)} (\text{last$(j)$ $-$ first$(j)$})
\end{align*}
We know that $\max(u) = \max(v) = \lceil\tfrac{n}{2}\rceil$, and that the first $\lceil\frac{n}{2}\rceil$ terms are the same in both $u$ and $v$. Accordingly, 
\begin{align*}
\spread(u)-\spread(v) = \sum_{i=1}^{\lceil\frac{n}{2}\rceil} \last(i)-  \sum_{j=1}^{\lceil\frac{n}{2}\rceil} \last(j).
\end{align*}
The righthand summations are merely reorderings of the same sum, and hence this difference vanishes. 

Now we show that this indeed gives the maximum spread. Let $\pi\in \Pi_n(123)$ be a partition which maximizes spread. Clearly $\pi$ cannot contain two singleton blocks; if it did, we could replace the singletons with their union to increase spread. Translating into RGFs, a maximizing RGF $w$ can contain at most one unique letter.

 Next, suppose $w\in R_n(123)$ maximizes spread and that the first $\lceil\frac{n}{2}\rceil$ elements are not strictly increasing. By the definition of RGFs there must exist indices $i,j$ such that $i < j \leq \lceil\frac{n}{2}\rceil$ but $w_i = w_j$. Note that in this scenario this common value is less than $\lfloor \frac{n}{2}\rfloor$. Importantly, in order for $w$ to be in $R_n(123)$, the letter $\lceil\frac{n}{2}\rceil$ \emph{must} appear in $w$ (a word of length $n$ which can repeat values at most twice must use at least $n/2$ letters). Combining this with the fact that $w$ is an RGF, we know the value $k=\max(w_1,\dots,w_j)+1$ is in $w$ as well. Transposing the first occurence of $k$ in $w$ with $w_j$ will increase spread, contradicting our assumption. 
 
From the previous two paragraphs, we know that if $w$ maximizes spread then it must contain at most one unique letter, and $w$ must start by strictly increasing up to the letter $\lceil\frac{n}{2}\rceil$. The only such words which also avoid the pattern $123$ are those described in our theorem.
 
 Now we finally calculate the spread of such an RGF. Let $w = 12 \dots \lceil\frac{n}{2}\rceil 12 \dots \lfloor\frac{n}{2}\rfloor$. We calculate the spread of this RGF, and it will be the same for all $w$ of the form $123 \dots \lceil\frac{n}{2}\rceil \sigma$. The spread is easy to calculate, since there are $\lfloor\frac{n}{2}\rfloor$ elements that contribute to the spread, and they each contribute $\lceil\frac{n}{2}\rceil$. So the spread is $\lceil\frac{n}{2}\rceil \lfloor\frac{n}{2}\rfloor$.
\done
As a fun corollary of the previous result we obtain the following well known identites:
\begin{align*}
    &1+3+\dots + 2k-1 = k^2,\\
    &2+4+\dots + 2k = k(k+1).
\end{align*}
This can be seen by applying Theorem 2.7 to the following partitions:
\begin{align*}
1(2k)/2(2k-1)/\dots/k(k+1)\;\;\text{ and }\;\;1(2k+1)/2(2k)/\dots/k(k+2)/(k+1).
\end{align*}
\begin{remark}
The partitions appearing in the proof of Theorem 2.7 also maximize spread over $\Pi_n$. If $n$ is odd, $n=2k+1$, there is one other class of partitions which do so. Using the notation from the above proof, they are the partitions whose associated RGF has the form 
$$
w = 12\dots kk\pi_1\dots\pi_k
$$
where $\pi$ is a permutation of $[k]$ (note that the letter $k$ will appear 3 times in such a word, hence it is not encompassed by Theorem 2.7).
\end{remark}
\section{Noncrossing Partitions}
Now we consider the set of partitions that avoid the pattern $13/24$. These partitions are called \emph{noncrossing} and have a rich combinatorial and algebraic history. The size of the set $\Pi_n(13/24)$ is given by the $n$th Catalan number, 
$$
C_n = \frac{1}{2n+1}\binom{2n}{n}.
$$
See the work of Armstrong for more information \cite{Arm09} on these partitions.  We will continue our analysis of the spread, block, and dimension statistics over this avoidance class, and in doing so obtain $(q,t)$-analogs of the standard Catalan recursion. Our main result is Theorem 3.3; to prove this we need characterizations of $R_n(13/24),$ which were provided by Campbell et al. in \cite{REURGF}. We restate their results for completeness.

\blem[\cite{REURGF}, Lemmas 5.1 and 5.2]
For a partition $\pi$, the following are equivalent:
\begin{itemize}
    \item[(1)] $\pi$ avoids $13/24$,
    \item[(1)] the RGF $w(\pi)=w_1\dots w_n$ avoids 1212 (as a subword pattern),
    \item[(2)] there are no $xyxy$ subwords in w,
    \item[(3)] if $w_{i} = w_{i'}$ for some $i < i'$ then, for all $j' > i'$, either $w_{j'} \leq w_{i'}$ or $w_{j'} > \max\{w_{1}, \dots, w_{i'}\}$.
\end{itemize}
\elem

For the next corollary and the following theorem we need the following notation. For a word $w=w_1\dots w_n$, let $1w$ denote the word obtained by prepending a $1$ to $w$. For an integer $k$, let $(w+k)$ denote the word whose $i$th letter is $w_i+k$. For example, if $w = 12134$, then $1(w+1) = 123245$. Similarly, if $u$ and $v$ are words then $uv$ is $u$ prepended to $v$.

\begin{corollary}[\cite{REURGF}, Lemma 5.3]
If $w$ is in $R_{n}(13/24)$ then both $1w$ and $1(w+1)$ are in $R_{n+1}(13/24)$.
\end{corollary}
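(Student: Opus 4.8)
The plan is to work entirely on the level of RGFs, using the characterization from Lemma 3.1 that a word avoids $13/24$ precisely when it contains no $xyxy$ subword (with $x \neq y$). For each of $1w$ and $1(w+1)$ there are two things to verify: that it is a genuine RGF of length $n+1$, and that it contains no $xyxy$ subword.

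First I would dispatch the RGF condition, which is routine. Since $w_1 = 1$, prepending a $1$ leaves the first letter equal to $1$ and merely shifts the growth constraint $a_i \leq 1 + \max\{a_1,\dots,a_{i-1}\}$ one index to the right, so $1w \in R_{n+1}$. For $1(w+1)$, the leading $1$ is followed by $w_1 + 1 = 2 \leq 1 + \max\{1\}$, and for every later letter the available maximum also increases by exactly $1$; thus the defining inequality for $w$ transfers verbatim and $1(w+1) \in R_{n+1}$.

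The subword condition for $1(w+1)$ is immediate. Every letter of $w+1$ is at least $2$, so the prepended $1$ is the unique occurrence of the value $1$ in $1(w+1)$. As each letter of an $xyxy$ pattern occurs at least twice, neither $x$ nor $y$ can equal $1$, so any such pattern avoids the first position and lies inside $w+1$. Adding a constant to every letter preserves all equalities among letters, so $w+1$ contains an $xyxy$ subword if and only if $w$ does; by hypothesis it does not.

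The one place requiring a small argument is $1w$, where the prepended $1$ is no longer the unique $1$, since $w_1 = 1$ is present as well. Suppose $1w$ contained an $xyxy$ subword at positions $p_1 < p_2 < p_3 < p_4$. If $p_1 \geq 2$ the whole pattern lies in $w$, contradicting the hypothesis. If $p_1 = 1$, then $x = 1$ and the pattern reads $1\,y\,1\,y$ at positions $1 < p_2 < p_3 < p_4$ with $y \neq 1$. Here I would use $w_1 = 1$: the letter at position $2$ of $1w$ is $w_1 = 1 \neq y$, so $p_2 \neq 2$ and hence $p_2 \geq 3$. Then the positions $2 < p_2 < p_3 < p_4$, all at least $2$, again carry the values $1\,y\,1\,y$, and shifting these indices down by one exhibits an $xyxy$ subword inside $w$, a contradiction. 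Thus $1w$ avoids $xyxy$ and therefore $13/24$, completing the argument. The main (and only) obstacle is precisely this replacement step: one must check that the existing leading $1$ of $w$ can stand in for the prepended $1$ without colliding with the first occurrence of $y$, which is exactly what $p_2 \geq 3$ guarantees.
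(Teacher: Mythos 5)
Your proof is correct and follows exactly the route the paper intends: the paper states this result as a corollary of Lemma 3.1 without reproducing a proof (deferring to the cited reference), and the natural derivation from that lemma is precisely yours---check the RGF growth condition directly and rule out $xyxy$ subwords, using for $1(w+1)$ that the prepended $1$ occurs only once. Your treatment of the one nontrivial case, where a purported $1\,y\,1\,y$ occurrence in $1w$ uses the prepended $1$ and you substitute the original $w_1=1$ for it after observing $p_2\geq 3$, is exactly the step that makes the argument complete.
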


The next theorem gives a recursive formula for $SB_n(13/24;q,t)$. The structure of this proof will closely follow that of Theorem 5.4 in \cite{REURGF}.
\begin{theorem}
We have
\begin{align*}
\SB_{0}(13/24;q,t) &= 1 \\
\SB_{1}(13/24;q,t) &= t
\end{align*}
and for $n\geq 2$
\begin{equation*}
\SB_{n}(13/24;q,t) = tSB_{n-1}(13/24;q,t) + \sum_{k=2}^{n}q^{k-1}\SB_{k-2}(13/24;q,t)\SB_{n-k+1}(13/24;q,t).
\end{equation*}
Similarly, $\Dim_0(13/24;q)=1$, $\Dim_1(13/24;q) = q$, and for $n\geq 2$
$$
\Dim_n(13/24;q) = q\Dim_{n-1}(13/24;q) +\sum_{k=2}^nq^{k-1}\Dim_{k-2}(13/24;q)\Dim_{n-k+1}(13/24;q).
$$
\end{theorem}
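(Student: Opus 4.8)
The plan is to work entirely with restricted growth functions via Lemma 3.1, writing $\SB_n(13/24;q,t) = \sum_{w \in R_n(13/24)} q^{\spread(w)}t^{\block(w)}$, where $R_n(13/24)$ is the set of length-$n$ RGFs avoiding the subword pattern $1212$, and reading $\spread$ and $\block$ off $w$ using Lemma 2.2. The base cases $\SB_0 = 1$ and $\SB_1 = t$ are immediate. For $n \geq 2$ I would split $R_n(13/24)$ according to how many times the letter $1$ occurs. If $1$ occurs exactly once then, since $w_1 = 1$, the word has the form $w = 1(v+1)$ for a unique $v \in R_{n-1}(13/24)$ (well-defined by the preceding corollary); prepending $1$ leaves every $\last(i) - \first(i)$ for $i \geq 2$ unchanged and adds a single new block that contributes nothing to spread, so $\spread(w) = \spread(v)$ and $\block(w) = \block(v) + 1$. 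Summing over such $w$ yields the leading term $t\,\SB_{n-1}(13/24;q,t)$.

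The heart of the argument is the case where $1$ occurs at least twice. Here I would let $k$ be the position of the second occurrence of $1$, so $2 \leq k \leq n$, and decompose $w = 1\,A\,C$ where $A = w_2\cdots w_{k-1}$ has length $k-2$ and contains no $1$'s, while $C = w_k\cdots w_n$ has length $n-k+1$ and begins with the letter $1$. Since $A$ uses no $1$, the word $A' := A - 1$ obtained by subtracting $1$ from each letter is an RGF of length $k-2$; and because $w$ avoids $1212$ with the two flanking $1$'s at positions $1$ and $k$, no letter of $A$ can reappear in $C$, so the non-$1$ letters of $C$ all exceed $\max(A)$ and relabel order-preservingly to an RGF $C'$ of length $n-k+1$. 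I would then verify, by tracking $\first$ and $\last$ of each value through the decomposition via Lemma 2.2, that $\spread(w) = (k-1) + \spread(A') + \spread(C')$ and $\block(w) = \block(A') + \block(C')$, where the extra $k-1$ records that the first and last $1$ of $w$ are pulled $k-1$ apart by the intervening block $A$. This contributes $q^{k-1}\SB_{k-2}(13/24;q,t)\,\SB_{n-k+1}(13/24;q,t)$ for each $k$, and summing over $k$ from $2$ to $n$ together with the leading term gives the recursion.

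The step I expect to be the main obstacle is showing that $w \mapsto (k, A', C')$ is a bijection onto $\{2,\dots,n\} \times R_{k-2}(13/24) \times R_{n-k+1}(13/24)$ — in particular that the inverse, gluing $1$, $A'+1$, and the reshifted $C'$ back together, always lands in $R_n(13/24)$. The forward direction is easy, since every subword of a $1212$-avoiding word avoids $1212$. For the reverse I would use that the value ranges of $A$, namely $\{2,\dots,\max A\}$, and of the non-$1$ letters of $C$, all exceeding $\max A$, are disjoint, and then rule out a pattern $xyxy$ in the glued word by cases on where the repeated values $x,y$ live: if both lie in the $A$-block or both in the $C$-block the pattern descends to $A'$ or $C'$; a pattern with one value in each block cannot have its positions interleave as $p_1<p_2<p_3<p_4$; and any pattern involving the value $1$ forces three of its positions into the $C$-block, yielding a $1212$ already inside $C'$. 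This is exactly the point where the noncrossing hypothesis is used, and it is what makes the product $\SB_{k-2}\,\SB_{n-k+1}$ decouple. Finally, since $\dim = \spread + \block$, setting $t = q$ sends $\SB_n(13/24;q,t)$ to $\Dim_n(13/24;q)$, so the stated $\Dim$ recursion, with base cases $\Dim_0 = 1$ and $\Dim_1 = q$, follows at once from the one for $\SB_n$.
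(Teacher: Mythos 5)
Your proposal is correct and takes essentially the same route as the paper: the paper likewise handles the unique-$1$ case via $w=1(u+1)$ and otherwise splits by the position $k$ of the second occurrence of $1$, writing $w=1(u+1)1v$ with $u\in R_{k-2}(13/24)$ and $\st(1v)\in R_{n-k+1}(13/24)$, using the $xyxy$-avoidance of Lemma 3.1 to show no middle-block letter recurs after position $k$, and tracking $\spread(w)=\spread(u)+\spread(\st(1v))+k-1$ and $\block(w)=\block(u)+\block(\st(1v))$ exactly as you do, with the $\Dim$ recursion obtained by setting $t=q$. Your explicit case analysis confirming that the inverse gluing map lands in $R_n(13/24)$ simply fills in a verification the paper leaves to the reader.
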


\begin{proof}
The initial conditions are readily verified; we will focus on proving the recursion. To prove this theorem we divide the set $R_n(13/24)$ into disjoint subsets, then split our analysis of the block and spread statistics individually over these sets. The sets of interest are as follows:
$$
X = \{w \in R_n(13/24) : w_{1} = 1 \text{ and there are no other 1s in $w$}\} $$
and, for $k\geq 2$,
$$Y_k = \{w \in R_n(13/24) : w_1 = w_k = 1\text{ and }w_j>1\text{ for }1<j<k\}.$$
That these sets partition $R_n(13/24)$ is clear.

We will begin by looking at $X$. We can describe $X$ as 
$$
X = \{w\in R_n(13/24): w=1(u+1) \text{ for some }u\in R_{n-1}(13/24)\},
$$
and from this description we obtain a bijection $X\to R_{n-1}(13/24)$. That this is a bijection follows from the definition of RGFs, together with Lemma 3.1 and Corollary 3.2.

If $w = 1(u+1)$, then $\block(w) = \block(u)+1$ since $\max(w) = \max(u) + 1$. Further, $\spread(w) = \spread(u)$, as incrementing each letter of $u$ by the same amount does not change its spread, and nor does prepending a (unique) 1 onto it. Thus
$$
\sum_{w\in X}q^{\spread(w)}t^{\block(w)} = t\SB_{n-1}(13/24,q,t).
$$
    
Next, we examine the sets $Y_k$. For $w\in Y_k$, we claim that $w$ is of the form $w = 1(u+1)1v$ where $u \in R_{k-2}(13/24)$, $\st(1v) \in R_{n-k+1}(13/24)$, and if $v_i \neq 1$, then $v_i > \max(u) + 1$. 

The first two requirements are clear. For the third, if there exists $v_i$ such that $1 < v_i \leq \max(u) + 1$ then there must exist $u_j$ such that $u_j$ + 1 = $v_i$. However, then we will have an $xyxy$ subword with $x = 1$ and $y = u_j + 1 = v_i$, which would then imply that $w \notin R_n(13/24)$ by Lemma 3.1.

In particular, this establishes a bijection 
$$
Y_k\to R_{k-2}(13/24)\times R_{n-k+1}(13/24);
$$
the inverse map sends $(u,v)\to 1(u+1)v'$, where 
\begin{equation}
v'_i = \begin{cases}1\text{ if }v_i=1\\v_i+\max(u)\text{ otherwise.}\end{cases}
\end{equation}
Now we examine the behaviour of block and spread under this map. Take $w = 1(u+1)1v\in Y_k$. If $1v$ consists solely of $1$s, then $\max(w) = 1+\max(u)$. Alternatively, the maximum value of $w$ will be found in $1v$; in any case, 
$$
\block(w) = \block(u) + \block(\st(1v)).
$$

Now consider the spread statistic. Since $(u+1)$ and $1v$ are disjoint, the spread of $w$ is \emph{almost} the sum $\spread(u)+\spread(\st(1v));$ however, this forgets the impact of prepending the leading $1$. As $u$ has length $k-2$, the effect of this is to increase spread by $k-1$. The correct formula is therefore 
$$
\spread(w) = \spread(u)+\spread(\st(1v))+k-1.
$$
Hence summing over $Y_k$ results in 
\begin{align*}
\sum_{w \in Y_k}q^{\spread(w)}t^{\block(w)} &= q^{k-1} \left(\sum_{u \in R_{k-2}(13/24)}q^{\spread(u)}t^{\block(u)}\right)\left(\sum_{\st(1v) \in R_{n-k+1}(13/24)}q^{\spread(v)}t^{\block(1v)}\right)\\
&=q^{k-1}\SB_{k-2}(13/24;q,t)\SB_{n-k+1}(13/24;q,t).
\end{align*}
Summing over $k$, and remembering the contribution of $X$, gives our desired result.
\end{proof}
The recursions in Theorem 3.3 have many siblings in the realm of Catalan combinatorics; we will not discuss any of them currently, as Section 5 is entirely devoted to this. 
\section{Multiple Patterns}
In this section we examine generating functions for statistics taken over set partitions avoiding two patterns. These calculations will provide refinements of work of Goyt \cite{Goy08} and lead to interesting connections to Fibonacci numbers and Motzkin paths. 

To start we will examine set partitions which avoid two patterns of size 3. Note that a simplified Erdos-Szekeres argument shows that there are finitely many set partitions avoiding both $1/2/\dots/n$ and $12\dots m$; in particular, we will not examine the finitely many partitions avoiding both $1/2/3$ and $123$. 

To help with our analysis, we display in Table 1 the following results from \cite{Goy08}, which discuss characterizations of avoidance classes in this context.
\begin{center}
\bgroup
\def\arraystretch{1.9}
\begin{tabular}{|c|c|}

\hline
    Avoidance Class & Corresponding Partitions \\
    \hline
    $\Pi_n(1/2/3,1/23)$ & $12\dots n$, $12\dots(n-1)/n$, $12\dots (n-2)n/(n-1)$\\
    \hline
        $\Pi_n(1/2/3,13/2)$ & $12\dots n$, $12\dots k/(k+1)\dots n$, $1\leq k < n$ \\
    \hline
        $\Pi_n(1/2/3,12/3)$ & $12\dots n$, $1/2\dots n$, $13\dots n/2$\\
    \hline
        $\Pi_n(1/23,13/2)$ & $12\dots n$, $12\dots k/(k+1)/\dots/n$, $1\leq k<n$\\
    \hline
        $\Pi_n(1/23,12/3)$ & $12\dots n$, $1n/2/\dots/(n-1)$, $1/2/\dots /n$\\
    \hline
        $\Pi_n(1/23,123)$ & $1/2/\dots/n$, $1k/2/\dots/(k-1)/(k+1)/\dots/n$, $1<k\leq n$\\
    \hline
        $\Pi_n(13/2,12/3)$ & $1/2/\dots /k\dots n$, $1\leq k\leq n$\\
    \hline
        $\Pi_n(13/2,123)$ & $\pi$ has blocks of size $\leq 2$ and $w(\pi)$ is weakly increasing\\
    \hline
        $\Pi_n(12/3,123)$ & $1/2/\dots/n$, $1/2/\dots/kn/\dots /(n-1)$, $1\leq k <n$\\
    \hline

\end{tabular}
\egroup
\\
\vspace{5pt}
Table 1: Partitions which avoid two patterns of length 3, from \cite{Goy08}
\end{center}
As is evident, many generating functions are straightforward to compute given the information in Table 1. The following calculations use ideas already introduced, and hence we merely state them.
\begin{proposition}
We have the following generating functions for $n\geq 1$:
\begin{align*}
    &\SB_n(1/2/3,1/23;q,t) = q^{n-1}(t +t^2)+ q^{n-2}t^2,\\
    &\SB_n(1/2/3,13/2;q,t) = q^{n-1}t+ (n-1)q^{n-2}t^2,\\
    &\SB_n(1/2/3,12/3;q,t) = q^{n-1}(t+t^2) + q^{n-2}t^2,\\
    &\SB_n(1/23,13/2;q,t) = q^{n-1}t+ \sum_{k=1}^{n-1}q^{k-1}t^{n-k+1},\\
    &\SB_n(1/23,123;q,t) = q[n-1]_qt^{n-1}+t^n,\\
    &\SB_n(13/2,12/3;q,t) = q^n\sum_{k=1}^n(q^{-1}t)^k,\\
    &\SB_n(12/3,123;q,t) =q[n-1]_qt^{n-1}+t^n.
\end{align*}
The generating functions for the dimension index are easily obtained via specialization. \qed
\end{proposition}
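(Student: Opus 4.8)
The plan is to treat each of the seven avoidance classes separately, in each case reading the explicit list of avoiding partitions directly off Table 1 and computing $\spread$ and $\block$ from their definitions. Because every partition's blocks are given explicitly in the table, there is no need to pass through RGFs and Lemma 2.2; instead I would read $\block(\pi)$ as the number of listed blocks and compute $\spread(\pi) = \sum_i(\max(B_i) - \min(B_i))$ by summing over those blocks, then record the monomial $q^{\spread(\pi)}t^{\block(\pi)}$ and add up the contributions.

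The seven classes fall into two types. For the finite families, Table 1 lists a bounded number of partitions, so the generating function is an explicit finite sum; for example, in $\Pi_n(1/2/3,1/23)$ the three partitions $12\dots n$, $12\dots(n-1)/n$, and $12\dots(n-2)n/(n-1)$ contribute $q^{n-1}t$, $q^{n-2}t^2$, and $q^{n-1}t^2$ respectively, which already assembles into the stated formula $q^{n-1}(t+t^2)+q^{n-2}t^2$. For the parametrized families the partitions come in a one-parameter family indexed by some $k$, and I would compute the contribution of the generic member as a function of $k$ and then sum a geometric-type series. In $\Pi_n(13/2,12/3)$, for instance, the partition $1/2/\dots/(k-1)/k\dots n$ has $k$ blocks and spread $n-k$, so summing $q^{n-k}t^k$ over $1\le k\le n$ yields $q^n\sum_{k=1}^n(q^{-1}t)^k$; similarly, in both $\Pi_n(1/23,123)$ and $\Pi_n(12/3,123)$ a single non-singleton block carries the entire spread, and the resulting geometric sum in $k$ collapses to $q[n-1]_q t^{n-1}$, with the all-singleton partition contributing the extra $t^n$. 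The remaining families $\Pi_n(1/2/3,13/2)$, $\Pi_n(1/2/3,12/3)$, and $\Pi_n(1/23,13/2)$ are handled by the same two moves.

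The only real care required is in parsing the Table 1 notation correctly—that is, determining exactly which elements lie in which block—and in tracking the spread contribution of the one spread-bearing block in each parametrized family, since the singleton blocks contribute nothing. Once the per-family contributions are written down, each generating function is a routine simplification, and recognizing the closed forms (the $q$-analogue $[n-1]_q$ and the geometric sum $\sum_k(q^{-1}t)^k$) is the last step. Finally, every dimension-index formula follows immediately by the specialization $\Dim_n(P;q)=\SB_n(P;q,q)$, which is valid because $\dim=\spread+\block$.
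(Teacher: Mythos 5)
Your proposal is correct and takes essentially the same approach the paper intends: the paper gives no written proof for this proposition, merely asserting that the formulas follow from Table 1, and your direct evaluation of $\spread$ and $\block$ on each listed family (explicit finite lists, plus one-parameter families summed as geometric series) is exactly that omitted computation. Your spot checks all verify — e.g., the contribution $q^{n-k}t^{k}$ for $\Pi_n(13/2,12/3)$, the collapse to $q[n-1]_q t^{n-1}+t^n$ for the two classes involving $123$, and the specialization $\Dim_n(P;q)=\SB_n(P;q,q)$ justified by $\dim=\spread+\block$.
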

Note that the generating function $\SB_n(13/2,123)$ is missing from the previous proposition. It turns out that this is the most interesting duo of length $3$ patterns to avoid. The cardinality of $\Pi_n(13/2,123)$ is given by the $n$th Fibonacci number, see Proposition 2.10 of \cite{Goy08}. Moreover, one obtains interesting $q$-Fibonacci numbers by studying the distribution of statistics over this set; see the work of Goyt and Sagan, who examine the ``fundamental statistics'' of Wachs and White over this set \cite{GS09}. The analog in this situation is as follows:
\begin{proposition}
We have $\SB_0(13/2,123;q,t)=1,$ $\SB_1(13/2,123;q,t) = t$, and for $n\geq 2$
$$
\SB_n(13/2,123;q,t) = t\SB_{n-1}(13/2,123;q,t)+qt\SB_{n-2}(13/2,123;q,t).
$$
\end{proposition}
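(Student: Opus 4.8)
The plan is to use the characterization of $\Pi_n(13/2,123)$ from Table 1, together with Lemma 2.2, to reduce the problem to counting compositions of $n$ into parts of size $1$ and $2$. First I would spell out the RGF description: combining parts (3) and (5) of Theorem 2.1, a word lies in $R_n(13/2,123)$ precisely when it is weakly increasing and no letter is repeated more than twice. A weakly increasing RGF necessarily has the form $1^{a_1}2^{a_2}\cdots m^{a_m}$ with every $a_i\geq 1$, and the $123$-avoidance condition forces $a_i\in\{1,2\}$ for all $i$. Thus $R_n(13/2,123)$ is in bijection with the set of compositions $(a_1,\dots,a_m)$ of $n$ into parts equal to $1$ or $2$ (recovering, as a sanity check, that $|\Pi_n(13/2,123)|$ is a Fibonacci number).

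Next I would read off the two statistics from this description using Lemma 2.2. The block statistic is $\block(w)=\max(w)=m$, the number of parts. For the spread, the letter $i$ contributes $\last(i)-\first(i)$, which is $0$ when $a_i=1$ and $1$ when $a_i=2$, since in the latter case the two copies of $i$ occupy adjacent positions. Hence $\spread(w)$ equals the number of parts of size $2$. Writing a composition with $p$ parts equal to $1$ and $r$ parts equal to $2$, its contribution to $\SB_n$ is $q^{r}t^{p+r}$; equivalently, a part of size $1$ contributes a factor $t$ to the weight and a part of size $2$ contributes a factor $qt$.

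The recursion then follows by conditioning on the final part of the composition. If the last part equals $1$, deleting it is a bijection onto the compositions of $n-1$ (equivalently $R_{n-1}(13/2,123)$), and the deleted singleton multiplies the weight by $t$; this accounts for the term $t\SB_{n-1}(13/2,123;q,t)$. If the last part equals $2$, deleting it is a bijection onto the compositions of $n-2$, and the deleted doubleton multiplies the weight by $qt$ (raising both block and spread by one); this accounts for $qt\SB_{n-2}(13/2,123;q,t)$. Summing these two disjoint cases yields the stated recursion, and the base cases $\SB_0=1$ and $\SB_1=t$ are immediate.

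I expect no serious obstacle: once the avoidance class is identified with compositions into $1$s and $2$s, the statistic bookkeeping is routine and the decomposition is the standard Fibonacci recursion. The only points requiring a little care are the spread computation—verifying that a doubleton contributes exactly $1$ because its two equal letters sit in adjacent positions—and confirming that deleting the last part is genuinely a bijection rather than merely a surjection, which is clear since the word and its composition determine one another.
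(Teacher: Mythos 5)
Your proof is correct and takes essentially the same approach as the paper: the paper also recurses on the last block of $\pi\in\Pi_n(13/2,123)$, observing that weak increase of $w(\pi)$ forces it to be an interval and $123$-avoidance forces its size to be $1$ or $2$, giving the terms $t\,\SB_{n-1}$ and $qt\,\SB_{n-2}$ respectively. Your reformulation via compositions into parts of size $1$ and $2$ simply makes explicit the statistic bookkeeping that the paper leaves to the reader.
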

\begin{proof}
We recurse on the last block $B$ of $\pi\in \Pi_n(13/2,123)$. As $w(\pi)$ is weakly increasing, $B$ is an interval. As $\pi$ avoids $123$, this interval is size $1$ or size $2$. Thus removing $B$ yields a partition in either $\Pi_{n-1}(13/2,123)$ or $\Pi_{n-2}(13/2,123)$. Studying how the statistics behave under this recursion gives the desired result.
\end{proof}
This result has several connections to known facts; for instance, there is a clear map from partitions in $\Pi_n(13/2,123)$ to tilings of a $1\times n$ rectangle using $1\times 1$ squares (monominoes) and $1\times 2$ rectangles (dominoes). Specializing $t=1$ in $\SB_n(13/2,123;q,t)$ lets us examine the number of dominoes in these tilings, and recovers a well known Fibonacci identity:
$$
\SB_{n}(13/2,123,q,1) = \sum_{k=\lceil n/2\rceil}^{n}\binom{k}{n-k}q^k.
$$
Alternatively we may set $q=1$, which counts tilings by the total number of tiles used. We recover OEIS Sequence A129710 in doing so; surely more connections can be made by examining other specializations. The interested reader should examine Goyt and Sagan's $q$-Fibonacci numbers \cite{GS09}, those of Carlitz \cite{Car74} \cite{Car75}, Cigler \cite{Cig03}, or many others.

We end this section by combining a pattern of length $3$ with the pattern $13/24$. We start with $\Pi_n(123,13/24)$; in doing so we obtain $q$-analogs of the Motzkin numbers. 

By combining Theorem 2.1 and Lemma 3.1, it is easy to characterize $\Pi_n(123,13/24)$ as simply the noncrossing partitions in which each block has size $\leq 2$. Following the proof of Theorem 3.3, we have the following $(q,t)$-Motzkin recursion:
\begin{theorem}
We have $\SB_0(123,13/24;q,t) = 1$, $\SB_1(123,13/24;q,t) = t$, and for $n\geq 2$, 
$$
\SB_n(123,13/24;q,t) = t\SB_{n-1}(123,13/24;q,t) + \sum_{k=0}^{n-2}q^{k+1}\SB_k(123,13/24;q,t)\SB_{n-k-2}(123,13/24;q,t).
$$
\end{theorem}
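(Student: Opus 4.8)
The plan is to follow the proof of Theorem 3.3, modifying it to account for the extra restriction imposed by $123$-avoidance. Combining Theorem 2.1 with Lemma 3.1, the words in $R_n(123,13/24)$ are exactly the RGFs that avoid the subword pattern $1212$ and in which no letter occurs more than twice; equivalently, they encode the noncrossing partitions all of whose blocks have size at most $2$. As in Theorem 3.3, I would stratify $R_n(123,13/24)$ by the occurrences of the letter $1$. The essential difference is that, since each block now has size at most $2$, the letter $1$ occurs \emph{either once or exactly twice}. Accordingly, let $X$ consist of the words whose only $1$ lies in position $1$, and for $2\le j\le n$ let $Y_j$ consist of the words with $w_1=w_j=1$ and $w_i>1$ for $1<i<j$. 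These sets partition $R_n(123,13/24)$, and the letter $1$ is used up after position $j$ in $Y_j$.

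The stratum $X$ is handled verbatim as in Theorem 3.3: writing $w=1(u+1)$ for $u\in R_{n-1}(123,13/24)$ gives a bijection $X\to R_{n-1}(123,13/24)$ which preserves spread (the singleton $1$ contributes nothing) and increments block by one, so that $X$ contributes $t\,\SB_{n-1}(123,13/24;q,t)$.

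For $Y_j$ the key structural point is that the block of $\pi$ containing the element $1$ is exactly $\{1,j\}$, a block of size $2$. I would show that $w=1(u+1)1v$ splits into a prefix $u$ on positions $2,\dots,j-1$ and a suffix $v$ on positions $j+1,\dots,n$. In contrast to Theorem 3.3, the suffix contains \emph{no} further occurrence of $1$, since a third $1$ would violate $123$-avoidance; hence $\st(v)$ is a genuine element of $R_{n-j}(123,13/24)$ rather than a word inheriting a leading $1$. Avoidance of $1212$ with $x=1$ then forces the letters used in the prefix and suffix to be disjoint: a value $c>1$ appearing at some position in $[2,j-1]$ and again in $[j+1,n]$ would, together with the two $1$'s, produce a $1\,c\,1\,c$ subword. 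This yields a bijection $Y_j\to R_{j-2}(123,13/24)\times R_{n-j}(123,13/24)$, whose inverse rebuilds $w$ by shifting the suffix letters above $\max(u)+1$ exactly as in the proof of Theorem 3.3.

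It then remains to track the statistics. Because the prefix and suffix occupy disjoint position-intervals and use disjoint letters, their block and spread statistics add, while the block $\{1,j\}$ contributes one extra block and spread $j-1$; thus $\block(w)=1+\block(u)+\block(\st v)$ and $\spread(w)=(j-1)+\spread(u)+\spread(\st v)$. Summing over $Y_j$ produces $t\,q^{\,j-1}\,\SB_{j-2}(123,13/24;q,t)\,\SB_{n-j}(123,13/24;q,t)$, and reindexing by $k=j-2$, together with the contribution of $X$, yields the Motzkin-type recursion. The step requiring the most care — and the genuine obstacle relative to the Catalan argument of Theorem 3.3 — is verifying that the suffix fully decouples into an independent copy of $R_{n-j}(123,13/24)$: this is precisely where the size-$2$ block restriction (forcing $v$ to contain no $1$) combines with the noncrossing condition (forcing disjoint letter sets) to replace the Catalan factor $\SB_{n-k+1}$ by the Motzkin factor $\SB_{n-k-2}$, with the block $\{1,j\}$ supplying the accompanying factor $q^{k+1}$ (and, as in the first term, a factor of $t$).
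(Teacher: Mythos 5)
Your proof is correct and follows the paper's argument essentially verbatim: the same decomposition into $X$ and the strata $Y_j$, the same key observation that $123$-avoidance (no letter occurring three times) forces the second $1$ to be the \emph{last} occurrence of $1$, so the suffix decouples as a genuine element of $R_{n-j}(123,13/24)$ rather than the $\st(1v)$ of Theorem 3.3, and the same statistic bookkeeping. One point deserves emphasis, however: your computation $\block(w)=1+\block(u)+\block(\st(v))$ is right, and it is exactly what produces the factor $t$ you carry in the summand $t\,q^{k+1}\SB_k\SB_{n-k-2}$ --- but that factor of $t$ is absent from the recursion as stated in the theorem. The stated version is in fact a typo, as a direct check at $n=2$ shows: $\SB_2(123,13/24;q,t)=qt+t^2$ (from the RGFs $11$ and $12$), whereas the stated recursion yields $t^2+q$ and your version correctly yields $t^2+qt$. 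So your proposal does slightly more than reproduce the paper's (quite compressed) proof: it silently corrects the theorem statement, and it also handles the indexing more carefully than the paper, whose proof asserts that $Y_k$ (defined by $w_1=w_k=1$) bijects with $R_k\times R_{n-k-2}$ when, under that definition, the bijection is with $R_{k-2}\times R_{n-k}$ --- the paper reindexes without saying so, exactly the $k=j-2$ substitution you make explicitly.
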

\begin{proof}
We proceed as in Theorem 3.3, partitioning $\Pi_n(123,13/24)$ into the sets
\begin{align*}
    X &= \{w\in R_n(111,1212):w_1=1\text{ and there are no other }1s\text{ in }w\}\\
    Y_k &= \{w\in R_n(111,1212):w_1=w_k=1\text{ and }w_j>1\text{ for }1<j<k\}.
\end{align*}
Analyzing these sets using the ideas discussed in Theorem 3.3 shows that $X$ is in bijection with $R_{n-1}(111,1212)$. Similarly, $Y_k$ is in bijection with $R_{k}(111,1212)\times R_{n-k-2}(111,1212)$. Be careful to note the change in the second index compared to Theorem 3.3; the bijection in this context is $1(u+1)1v\to (u,\st(v))$, in other words the latter $1$ \emph{ is not }included in the word $v$. This is because, as $w$ avoids $111$, the second $1$ is the \emph{last} occurence of $1$ in $w$. Analyzing the behavior of block and spread under these maps gives the desired recursion.
\end{proof}
Unsurprisingly, the generating function of Theorem 4.3 has many interesting connections to the literature. For instance, it arises by specializing the Motzkin Tunnel Polynomials of Barnabei, Bonetti, and Castronuovo at $z=x_i = t$, $y_i = q$; see Theorem 5 of \cite{Tunnel}. Alternatively, we may specialize to $q=1$ to obtain the Motzkin polynomials (OEIS A055151) which arise, among other places, in Marberg's study of the so called ``poor'' noncrossing partitions \cite{Mar}. Specializing to $t=1$ gives an alternate form of Motzkin polynomials which count Motzkin paths by area (OEIS A129181). It would be interesting to give bijective proofs of these results, in analogy to Theorem 5.10 of \cite{REURGF}.

We end this section by considering $\Pi_n(1/2/3,13/24)$. It is straightforward that an RGF associated to such a partition has the form $w = 1^k2^j1^{n-k-j}$. From this, the following is not difficult.
\begin{proposition}
We have 
$$
\SB_n(1/2/3,13/24) = tq^{n-1}+t^2\left((n-1)q^{n-2}+q^{n-1}\sum_{k=2}^{n-1}[n-k]_q\right).
$$
\qed
\end{proposition}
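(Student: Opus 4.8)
The plan is to begin from the characterization already recorded in the paragraph preceding the statement: by combining Theorem 2.1 (which forces the RGF of a $1/2/3$-avoider to use only the letters $1$ and $2$) with Lemma 3.1 (which forbids the subword $1212$), every $w \in R_n(1/2/3,13/24)$ has the shape $w = 1^k2^j1^{n-k-j}$ with $k\geq 1$, $j\geq 0$, and $k+j\leq n$. Here $k\geq 1$ because an RGF starts with $1$, and the ban on $1212$ is exactly what prevents a second block of $2$s from resuming after the trailing $1$s. I would make these ranges explicit so that summing over all valid triples $(k,j,n-k-j)$ enumerates the class without repetition, noting that $j=0$ collapses to the single word $1^n$ regardless of $k$.

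Next I would read off $\block$ and $\spread$ from Lemma 2.2 using $\first$ and $\last$, splitting into three cases. If $j=0$ then $w=1^n$, so $\block(w)=1$ and $\spread(w)=\last(1)-\first(1)=n-1$, contributing the term $tq^{n-1}$. If $j\geq 1$ then $\max(w)=2$, so $\block(w)=2$, and the spread splits according to whether trailing $1$s are present. When $k+j=n$ (no trailing $1$s) the word is $1^k2^{n-k}$ with $1\leq k\leq n-1$: here $\first(1)=1,\ \last(1)=k,\ \first(2)=k+1,\ \last(2)=n$, giving $\spread(w)=(k-1)+(n-k-1)=n-2$; these $n-1$ words together contribute $(n-1)t^2q^{n-2}$. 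When $1\leq k\leq n-j-1$ (trailing $1$s present) we instead get $\last(1)=n$ and $\last(2)=k+j$, so $\spread(w)=(n-1)+(j-1)=n+j-2$, independent of $k$.

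Summing this last case over its parameters gives $\sum_{j=1}^{n-2}(n-j-1)\,t^2q^{n+j-2}$, since for each fixed $j$ there are exactly $n-j-1$ admissible values of $k$. The final step is to recognize this polynomial as $t^2q^{n-1}\sum_{k=2}^{n-1}[n-k]_q$; I would do this by reindexing $\sum_{k=2}^{n-1}[n-k]_q=\sum_{m=1}^{n-2}[m]_q$ and collecting the coefficient of each power of $q$, observing that the number of $m$ in range with $m\geq i+1$ reproduces the weights $n-1-j$. Assembling the three contributions then yields the stated formula. The only real obstacle is bookkeeping: keeping the three cases disjoint, pinning down $\first$ and $\last$ correctly when trailing $1$s are or are not present, and carrying out the reindexing that converts the explicit sum into the compact $q$-analog form; everything else is a direct application of Lemma 2.2.
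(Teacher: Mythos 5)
Your proposal is correct and follows exactly the route the paper intends: it uses the characterization $w = 1^k2^j1^{n-k-j}$ stated just before the proposition, reads off block and spread via Lemma 2.2 in the three cases ($j=0$; no trailing $1$s; trailing $1$s present), and the resulting sum $\sum_{j=1}^{n-2}(n-1-j)\,t^2q^{n+j-2}$ does reindex to $t^2q^{n-1}\sum_{k=2}^{n-1}[n-k]_q$ as you claim. The paper leaves this verification to the reader (the proposition is stated with only the preceding remark as justification), and your write-up supplies precisely those omitted details, including the correct handling of the non-unique decomposition when $j=0$.
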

\section{Connections to $321$-Avoiding Permutations}
In this section we show how the methods of Section 3 can be used to obtain results on 321-avoiding permutations. To do so, we briefly recall a few notions (for a more extended introduction to permutation patterns, we refer the reader to a reference such as \cite{Kit11}). 

A \emph{permutation} of $[n]$ is a bijection $\pi:[n]\to[n]$. We will write $\pi$ in one line form as $\pi = \pi_1\dots \pi_n$, where $\pi_i = \pi(i)$. Let $\mathfrak{S}_n$ denote the permutations of length $n$; for example, 
$$
\mathfrak{S}_3 = \{123,132,213,231,312,321\}.
$$
As with set partitions there is a natural notion of containment in permutations. Namely, given a permutation $\pi$ of length $n$ and a permutation $\tau$ of length $m$, we say $\pi$ contains $\tau$ as a pattern if there is a subword $\pi_{i_1}\dots \pi_{i_m}$ in $\pi$ with $i_1<\dots<i_m$ such that 
$$
\pi_{i_j}<\pi_{i_k} \text{ if and only if }\tau_j<\tau_k.
$$

So for example, the permutation $\pi = 1432$ contains the permutation $321$; this is exhibited by examining the last three letters of $1432$. However, the permutation $\sigma = 1342$ avoids the pattern $321$ as it contains no decreasing subsequence of length $3$. We let $\Av_n(\pi)$ denote the length $n$ permutations which avoid the pattern $\pi$.

The study of permutation patterns has an extensive history, dating back to MacMahon \cite{Mac15} and to Knuth \cite{Knu05}. As with set partitions, there is much interest in computing the size of avoidance classes of permutations. And the use of combinatorial statistics allows one to refine these counts; this was pioneered in work of Dokos, Dwyer, Johnson, Sagan and Selsor \cite{DDJSS12}. 

This section will concern itself with five statistics on permutations; they are of combinatorial interest but also arise in the algebraic theory of symmetric groups. Let $\pi=\pi_1\dots \pi_n$ be a permutation. An index $i$ is a \emph{left-to-right maxima} if $\pi_j<\pi_i$ for all $j<i$ (this condition is vacuously satisfied by the first index). An index $i$ is a \emph{fixed point} if $i=\pi_i$. A pair of indices $(i,j)$ form an \emph{inversion} if $i<j$ and $\pi_i>\pi_j$. Finally, an index $i$ is a \emph{descent} if $\pi_i>\pi_{i+1}$. We may now define the following statistics:
\begin{align*}
    \LRM(\pi) &= \#\{i:i\text{ is a left-to-right maxima }\}\\
    \inv(\pi) &= \#\{(i,j):(i,j)\text{ is an inversion }\}\\
    \fix(\pi) &= \#\{i:i\text{ is a fixed point }\}\\
    \des(\pi) &= \#\{i:i\text{ is a descent }\}\\
    \maj(\pi) &= \sum_{i\text{ a descent}} i.
\end{align*}

Sparked by Conjecture 3.2 and Question 3.4 of \cite{DDJSS12}, Cheng, Elizalde, Kasraoui, and Sagan have provided (among other things) Catalan recursions for the inversion polynomials for $321$-avoiding permutations, as well as for the joint generating functions for descents and the major index \cite{CEKS13}. Their proofs involve beautiful connections of $321$-avoiding permutations with lattice paths and polyominoes. 

The purpose of this section is to give a quick alternative proof of certain results of \cite{CEKS13} using noncrossing partitions. Namely, we will present Catalan recursions for the generating functions 
\begin{align*}
    I_n(q,t,x)&:= \sum_{\pi \in Av_n(321)}q^{\inv(\pi)}t^{\LRM(\pi)}x^{\fix(\pi)}\\
    M_n(q,t,x)&:= \sum_{\pi \in Av_n(321)}q^{\maj(\pi)}t^{\des(\pi)}x^{\LRM(\pi)}.
\end{align*}To do so, we take advantage of the following characterization of $321$-avoiding permutations. Given $\pi\in \mathfrak{S}_n$, define binary vectors 
\begin{align*}
\pos(\pi) &=(p_1,\dots,p_n)\\
\val(\pi) &=(v_1,\dots,v_n) 
\end{align*}
with 
$$
p_i = \begin{cases}1\text{ if }i\text{ is a left-right maximum in }\pi\\0\text{ otherwise} \end{cases}
$$
and 
$$
v_i = \begin{cases}1\text{ if there is a left-to-right maxima }j\text{ with } \pi_j=i\\0\text{ otherwise.} \end{cases}
$$
In other words, $\val(\pi)$ describes the values of left-right maxima in $\pi$, and $\pos(\pi)$ determines the positions of these maxima.

The following is a folklore lemma, documented in \cite{CEKS13}:
\begin{lemma}{[CEKS13, Lemma 2.1]}
The assignment $\pi \to (pos(\pi),val(\pi))$ induces a bijection between $Av_n(321)$ and the set of pairs of binary vectors $(p_1\dots p_n,v_1\dots v_n)$ satisfying
\begin{itemize}
\item The number of $1$s in $p_1\dots p_n$ equals the number of $1$s in $v_1\dots v_n$, and
\item For any index $1\leq i\leq n-1$, the number of $1$s in $p_1\dots p_i$ is \emph{strictly greater} than the number of $1$s in $v_1\dots v_{i-1}$.
\end{itemize} \qed
\end{lemma}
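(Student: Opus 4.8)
The plan is to exhibit an explicit inverse to the assignment $\pi\mapsto(\pos(\pi),\val(\pi))$ and check that the two maps are mutually inverse. Everything rests on two standard structural facts about $321$-avoiding permutations, which I would establish first. The first is that if $\pi\in\Av_n(321)$ then the entries of $\pi$ that are \emph{not} left-to-right maxima, read from left to right, form an increasing subsequence: if two such entries satisfied $\pi_i>\pi_j$ with $i<j$, then since $\pi_i$ is not a left-to-right maximum there is some $k<i$ with $\pi_k>\pi_i$, and $\pi_k\pi_i\pi_j$ would be an occurrence of $321$. In particular every $\pi\in\Av_n(321)$ is a merge of the two increasing subsequences formed by its maxima and its non-maxima, and conversely any permutation that is a union of two increasing subsequences avoids $321$. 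The second fact is that if the left-to-right maxima occur at positions $a_1<\dots<a_m$ with values $c_1<\dots<c_m$, then $a_k\le c_k$ for every $k$: indeed $c_k=\pi_{a_k}$ is the maximum of the $a_k$ distinct values $\pi_1,\dots,\pi_{a_k}$, so $c_k\ge a_k$.

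For the forward direction I would verify that $(\pos(\pi),\val(\pi))$ lies in the target set. That the two vectors have equally many $1$s is immediate, since the $m$ left-to-right maxima are counted once by position and once by value. Writing $P_i$ and $V_i$ for the number of $1$s among the first $i$ entries of $\pos(\pi)$ and of $\val(\pi)$ respectively, the second fact gives $V_{i-1}\le P_{i-1}$ at every prefix (a maximum of value $\le i-1$ must sit at a position $\le i-1$). If this inequality is strict then $P_i\ge P_{i-1}>V_{i-1}$; if instead $V_{i-1}=P_{i-1}$, then every maximum among the first $i-1$ positions already has value $\le i-1$, which forces $\{\pi_1,\dots,\pi_{i-1}\}=\{1,\dots,i-1\}$, so $\pi_i$ is a new record and $P_i=P_{i-1}+1>V_{i-1}$. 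Either way the required strict inequality holds.

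For the inverse I would, given an admissible pair $(p,v)$, let $a_1<\dots<a_m$ and $c_1<\dots<c_m$ be the positions of the $1$s in $p$ and in $v$, let $a'_1<\dots$ and $d_1<\dots$ be the positions of the $0$s, and set $\pi_{a_k}=c_k$ and $\pi_{a'_l}=d_l$. This is manifestly a permutation and, being a union of two increasing subsequences, avoids $321$ by the first fact. It remains to show that the left-to-right maxima of this $\pi$ are exactly the positions $a_k$, carrying the values $c_k$, so that $\pos$ and $\val$ return the original pair; this is the step where the inequality hypothesis is indispensable, and I regard it as the main obstacle. I would argue by induction that the running maximum after position $i$ equals $c_{P_i}$: at a position with $p_i=1$ the newly placed value $c_{P_i}$ exceeds the previous record $c_{P_{i-1}}$ and creates a maximum, while at a position with $p_i=0$ the prefix inequality $P_i>V_{i-1}$ translates (via $a'_l=i$, $l=i-P_i$) into $c_{P_i}\ge i$, forcing the placed value $d_{\,i-P_i}$ to lie strictly below $c_{P_i}$, so that no spurious maximum appears; the dominance $P_i\ge V_i$, which this same hypothesis supplies, simultaneously guarantees that each prescribed maximum genuinely dominates all earlier entries. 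Checking that the two maps are mutually inverse then reduces to matching the sorted lists of positions and values, a short bookkeeping verification.
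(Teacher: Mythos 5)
The paper gives no proof of this lemma at all --- it is stated with a citation to CEKS13 and a tombstone --- so there is no in-paper argument to compare against, and your proposal must stand on its own. Its architecture is sound: the two structural facts (non-maxima of a $321$-avoider form an increasing subsequence; the $k$th maximum satisfies $a_k\le c_k$) are correctly proved, the forward verification is correct, and the explicit inverse $\pi_{a_k}=c_k$, $\pi_{a'_l}=d_l$ together with the running-maximum induction is the right way to finish. One compressed step deserves its one-line justification: from $c_{P_i}\ge i$ you conclude $d_{i-P_i}<c_{P_i}$; the count is that among $\{1,\dots,c_{P_i}\}$ exactly $P_i$ values are $c$'s, so at least $c_{P_i}-P_i\ge i-P_i$ of them are $d$'s, whence $d_{i-P_i}\le c_{P_i}$, and equality is impossible since the $c$'s and $d$'s are disjoint.

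There is, however, a boundary issue you must confront explicitly. Your inverse argument invokes the prefix inequality $P_i>V_{i-1}$ at \emph{every} position with $p_i=0$, including $i=n$, whereas the statement as printed only grants it for $1\le i\le n-1$. This is not pedantry: with the printed range the lemma is literally false. For $n=2$ the pair $(10,10)$ satisfies both bullets (equal counts, and $P_1=1>V_0=0$), yet it is not of the form $(\pos(\pi),\val(\pi))$ for any $\pi\in\Av_2(321)$; your own construction sends it to $\pi=12$, whose image is $(11,11)$, so surjectivity onto the printed set fails. (For $n=3$ the printed conditions also admit $(110,110)$ and $(100,010)$ beyond the five image pairs the paper lists.) The missing instance $i=n$ of the inequality, combined with the equal-counts condition, is exactly the requirement $v_n=1$, i.e.\ that the value $n$ is a left-to-right maximum --- automatic for permutations but not implied by the printed bullets. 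Notice that your forward direction in fact proves $P_i>V_{i-1}$ for all $1\le i\le n$, since nothing in your case analysis uses $i\le n-1$; so what you have correctly proved is the lemma with the condition extended to $i=n$ (which is what CEKS13 requires), and you should say so rather than silently using the stronger hypothesis in the inverse direction while nominally working from the printed one. With that discrepancy flagged and the counting step above spelled out, the proof is complete.
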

Note that for the second condition to apply for $i=1$, we require that $p_1=1$. We will call such a pair of binary sequences a \emph{ballot pair}. The bijection for $n=3$ is reproduced below:
\begin{align*}
123&\to(111,111)\\
132&\to(110,101)\\
213&\to(101,011)\\
231&\to(110,011)\\
312&\to(100,001).
\end{align*}

The first step of this section is to establish a bijection between noncrossing partitions and ballot pairs; via Lemma 5.1, this will establish a bijection between noncrossing partitions and $321$-avoiding permutations. 

Let $w=w_1\dots w_n$ be the RGF of a noncrossing partition. Call a letter $w_i$ a \emph{first} if $w_j\neq w_i$ for $j<i$ and a \emph{last} if $w_j\neq w_i$ for $j>i$. With this terminology, we can define a map $T$ from noncrossing partitions to ballot pairs by sending
$$
w_1\dots w_n\to (f_1\dots f_n, l_1\dots l_n),
$$ 
with 
$$
f_i = \begin{cases}1\text{ if }w_i\text{ is a first,}\\0\text{ otherwise}\end{cases}
$$
and 
$$
l_i = \begin{cases}1\text{ if }w_i\text{ is a last,}\\0\text{ otherwise.}\end{cases}
$$
As an example, below we show the action of $T$ on $R_3(13/24)$: 
\begin{align*}
111&\to(100,001)\\
112&\to(101,011)\\
122&\to(110,101)\\
121&\to(110,011)\\
123&\to(111,111).
\end{align*}
Using Lemma 3.1, it is not hard to show the following:
\begin{lemma}
The map $T$ induces a bijection between noncrossing partitions of $[n]$ and ballot pairs of length $n$. 
\end{lemma}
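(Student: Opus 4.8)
The plan is to verify that $T$ lands in the set of ballot pairs, and then to invert it explicitly by a stack (last-in-first-out) reconstruction; this pinpoints exactly where the noncrossing hypothesis (Lemma 3.1) is needed. Throughout I would use that for an RGF $w$ the \emph{firsts} are precisely the positions at which a new letter is introduced, so that their number in any prefix $w_1\dots w_i$ equals $\max(w_1,\dots,w_i)$, while the \emph{lasts} are the positions at which a letter occurs for the final time; in particular the total number of firsts and of lasts both equal $\max(w)=\block(\pi)$.

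First I would check the ballot conditions for $T(w)=(f_1\dots f_n,\,l_1\dots l_n)$. Equality of the number of $1$s in $f$ and in $l$ is immediate from the remark above. For the strict inequality, fix $1\le i\le n-1$ and note that the number of firsts in $w_1\dots w_i$ is $a_i:=\max(w_1,\dots,w_i)$ and the number of lasts in $w_1\dots w_{i-1}$ is $b_{i-1}$, the number of letters whose final occurrence is at most $i-1$. The letter $w_i$ cannot be one of these finished letters, so it is either a brand-new letter, in which case $a_i=a_{i-1}+1\ge b_{i-1}+1$, or the recurrence of a letter still ``open'' at step $i$, in which case $a_{i-1}-b_{i-1}\ge 1$ and $a_i=a_{i-1}\ge b_{i-1}+1$. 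Either way $a_i>b_{i-1}$, which is the required condition (and forces $f_1=1$). Note this step uses only that $w$ is an RGF.

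Next I would build the inverse. Given a ballot pair $(p,v)$, process the positions left to right maintaining a stack of currently open letters together with a counter for the largest letter used so far. When $p_i=1$ I introduce a fresh letter (incrementing the counter), set $w_i$ to it, and push it; when $p_i=0$ I set $w_i$ to the letter currently on top of the stack; and whenever $v_i=1$ I pop that letter. The ballot inequality guarantees that the stack size $(\#\{j\le i-1: p_j=1\})-(\#\{j\le i-1: v_j=1\})$ is positive whenever $p_i=0$, so the procedure never reads from an empty stack, and equality of the totals guarantees the stack is empty at the end. By construction the resulting word $w$ is an RGF, and because letters are opened and closed in nested (LIFO) order it avoids the subword $1212$, hence $w\in R_n(13/24)$ by Lemma 3.1.

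The crux, and the place the noncrossing hypothesis really enters, is the verification that these two maps are mutually inverse; equivalently, that in a noncrossing RGF a non-first letter $w_i$ is \emph{forced} to equal the open letter with the most recent previous occurrence, i.e.\ the top of the stack. Indeed, if $w_i=a$ recurs at position $i$ with previous occurrence $p$, and some other open letter $b$ had its most recent occurrence $q$ with $p<q<i$ and reappears at some $r>i$, then $w$ would contain the subword $a\,b\,a\,b$ at positions $p<q<i<r$, a forbidden $1212$. Hence every open letter other than $a$ last occurred before $p$, so $a$ is indeed on top, matching the reconstruction; this gives $T^{-1}\circ T=\mathrm{id}$. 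Conversely, running $T$ on a reconstructed word returns $(p,v)$, since each letter is pushed once (its first occurrence), possibly reread, and popped once (its last occurrence). I expect this LIFO-forcing step to be the main obstacle, as it is the only point genuinely using Lemma 3.1; once it is in place, bijectivity follows. As a shortcut one could instead combine the ballot-image claim with injectivity (which the same forcing argument yields) and the fact that $|\Pi_n(13/24)|=C_n$ equals the number of ballot pairs, $|\Av_n(321)|=C_n$, to conclude without writing out the inverse explicitly.
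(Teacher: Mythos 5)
Your proposal is correct and is essentially the paper's own proof: your stack of open letters is exactly the paper's set $L_j$ of ``available'' letters (since an RGF introduces letters in increasing order, the top of your stack is $\max L_j$), and your left-to-right reconstruction coincides case-by-case with the paper's four-case inverse algorithm. The only difference is thoroughness -- you spell out the ballot-condition check, the $1212$-avoidance of the reconstructed word, and the LIFO-forcing step where Lemma 3.1 enters, all of which the paper dismisses with ``well defined,'' ``easy to see,'' and ``follows from inspection.''
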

\begin{proof}
We sketch how to reconstruct a noncrossing partition from its corresponding ballot pair. Let $(p_1\dots p_n,v_1\dots v_n)$ be a ballot pair of length $n$. We construct $w=w_1\dots w_n\in R_n(13/24)$ iteratively as follows. Start with the empty RGF, and set $L_0 = \emptyset$. Here the sets $L_j$ denote the set of ``available'' letters at any given step, i.e. the letters whose first occurrences have been placed but whose last occurrences have not yet been established. Having constructed $w_1\dots w_{j}$ and the set of available letters $L_j$, we determine $w_{j+1}$ and $L_{j+1}$ as follows. 
\begin{itemize}
\item If $p_{j+1} = v_{j+1} = 1$, set $w_{j+1} = \max\{w_1,\dots,w_j\}+1$ and $L_{j+1} = L_j$. In this case, the ``first'' letter is also a ``last,'' so our available letters do not change.

\item If $p_{j+1} = 1$ and $v_{j+1} = 0$, set $w_{j+1} = \max\{w_1,\dots,w_j\}+1$ and \\ $L_{j+1}=L_j\cup\{\max\{w_1,\dots,w_j\}+1\}$.

\item If $p_{j+1} = 0$ and $v_{j+1} = 1$, set $w_{j+1} = \max L_j$ and $L_{j+1} = L_{j+1}\setminus \max L_j$.

\item If $p_{j+1} = v_{j+1} =0$, set $w_{j+1} = \max L_j$ and $L_{j+1} = L_j$.
\end{itemize}
This process is well defined by the definitions of a ballot pair. It is easy to see that this process yields an RGF with no $xyxy$ patterns, and by Lemma 3.1 this implies $w$ is noncrossing. That this map is an inverse to $T$ follows from inspection.
\end{proof}
By combining Lemma 5.1 and 5.2, we have a bijection between noncrossing partitions of $[n]$ and $321$-avoiding permutations of length $n$. Abusing notation, we also call this map $T$.  To keep track of fixed points, descents, and the major index on permutations, we introduce the following statistics on partitions. These statistics were reverse engineered to provide insight into their permutation counterparts, so they might look quite ad hoc at first glance.  

Given a partition $\pi$, let $w=w_1\dots w_n$ denote its associated RGF. Maintaining notation as for permutations, call an index $i$ a left-to-right maxima if $w_j<w_i$ for all $j<i$ (this condition is vacuously satisfied by the first index). We call an index $i$ a \emph{checkpoint} if it is a left-to-right maxima {and} if $w_i<w_j$ for all $j>i$ (the second half of this condition is vacuously satisfied by the last index).  Finally, an index $i$ is an \emph{apex} if it is a left to right maxima in $w$ and if $w_i \geq w_{i+1}$ (the second half of this condition \emph{is not} vacuous; the last index in a word is never an apex).

For example, in the word $12213454$ the indicies $1,2,5,6,7$ are left-to-right maxima, the index $5$ is a checkpoint, and the indices $2$ and $7$ are apices. 
\begin{lemma}
Let $\pi$ be a noncrossing partition with corresponding RGF $w=w_1\dots w_n$ and with $T(w) = a_1\dots a_n$. Then 
\begin{itemize}
\item $\spread(w) = \inv(T(w))$,
\item $\block(w) = \LRM(T(w))$,
\item $i$ is a checkpoint in $w$ if and only if $i$ is a fixed point in $T(w)$,
\item $i$ is an apex in $w$ if and only if $i$ is a descent in $T(w)$.
\end{itemize}
\end{lemma}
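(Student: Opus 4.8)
The plan is to verify the four correspondences separately, exploiting the explicit bijection $T$ and the reconstruction process from the proof of Lemma 5.2. Throughout, write $w=w_1\dots w_n$ for the RGF of a noncrossing partition, and $T(w)=(\pos,\val)=(f_1\dots f_n,l_1\dots l_n)$ for the associated ballot pair, recalling that $f_i=1$ iff $w_i$ is a \emph{first} and $l_i=1$ iff $w_i$ is a \emph{last}. Under Lemma 5.1 this ballot pair corresponds to a permutation $T(w)\in \Av_n(321)$ whose left-to-right maxima occupy the positions marked by $\pos$ and take the values marked by $\val$.

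The block identity is immediate: $\block(w)=\max(w)$ equals the number of distinct letters, which is exactly the number of \emph{firsts}, i.e.\ the number of $1$s in $\pos$. By Lemma 5.1 this is the number of left-to-right maxima of $T(w)$, giving $\block(w)=\LRM(T(w))$. For the spread identity, I would first recall from Lemma 2.2 that $\spread(w)=\sum_{i}(\last(i)-\first(i))$, and then argue that $T$ matches this to $\inv(T(w))$. The cleanest route is to use the recursion in Theorem 3.3 together with the corresponding recursion for $I_n$: both the partition-side statistics and the permutation-side statistics satisfy the same Catalan recursion with the same $q^{k-1}$ weight, so $\spread$ and $\inv$ are carried to one another blockwise. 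Alternatively, and perhaps more transparently, one tracks directly how the reconstruction algorithm builds $w$: each ``available'' letter in $L_j$ contributes one unit of spread each time the index advances before its last occurrence is placed, and this count coincides with the number of inversions created as the permutation is read left to right. I expect the spread identity to be the main obstacle, precisely because $\inv$ is a global pairing statistic and matching it to $\spread$ requires care in the bookkeeping; leaning on the shared recursion of Theorem 3.3 is the safest way to avoid a delicate double-counting argument.

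For the checkpoint and apex identities I would argue directly from the reconstruction rules. An index $i$ is a left-to-right maxima in $w$ exactly when $w_i$ is a \emph{first} (this is how new maximal letters are introduced), so $i$ is a left-to-right maxima in $w$ iff $f_i=1$, which is iff $i$ is the position of a left-to-right maxima in $T(w)$. Now $i$ is a checkpoint iff additionally $w_i<w_j$ for all $j>i$; under the reconstruction this says $w_i$ is both a first and a last (it is introduced and never revisited, and no smaller available letter is ever reopened), which forces $f_i=l_i=1$ with equal counts of $1$s in $f_1\dots f_i$ and $l_1\dots l_i$. Translating through Lemma 5.1, this is precisely the condition that the $m$th left-to-right maximum of $T(w)$ sits in position $m$ with value $m$, i.e.\ that $i$ is a fixed point. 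For the apex identity, $i$ is an apex iff $i$ is a left-to-right maxima with $w_i\ge w_{i+1}$; on the permutation side a left-to-right maximum is followed by a descent exactly when the next position is \emph{not} itself a left-to-right maximum, i.e.\ $f_{i+1}=0$, and one checks that $w_i\ge w_{i+1}$ in the RGF corresponds under the reconstruction exactly to $f_{i+1}=0$. Thus $i$ is an apex iff $i$ is a descent.

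In carrying this out I would state the correspondence between left-to-right maxima of $w$ and of $T(w)$ as a preliminary observation, since all four parts rest on it, and then dispatch the block and apex/checkpoint parts quickly from the reconstruction rules. The spread$=$inv identity is the substantive step, and I would present it via the recursion of Theorem 3.3 matched against the Catalan recursion for $I_n$, so that the equality of the two statistics follows from uniqueness of the recursively-defined generating function rather than from a direct inversion-counting bijection.
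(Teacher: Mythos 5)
Your block, checkpoint, and apex arguments are essentially the paper's own proof: everything is read off the ballot pair, with $\block(w)$ and $\LRM(T(w))$ both counting the $1$s in $\pos$, a checkpoint characterized by $p_i=v_i=1$ together with equal counts of $1$s in the two prefixes (which forces $a_i=i$; note the fixed point sits at position $i$ with value $i$, not ``position $m$ with value $m$'' as you wrote), and descents of $T(w)$ being exactly the indices with $p_i=1$, $p_{i+1}=0$, which in the RGF says $w_i$ is a first and $w_{i+1}$ is not, hence $w_{i+1}\leq w_i$, i.e.\ an apex. Those three parts are fine.

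The gap is in the part you yourself flagged as the main obstacle. The lemma asserts the \emph{pointwise} identity $\spread(w)=\inv(T(w))$ for the specific bijection $T$, but your preferred route --- matching the recursion of Theorem 3.3 against a Catalan recursion for $I_n$ and appealing to uniqueness of the recursively defined generating function --- can only yield \emph{equidistribution} of $(\spread,\block)$ with $(\inv,\LRM)$, i.e.\ that \emph{some} statistic-preserving bijection exists, not that $T$ is one. To upgrade this to the pointwise claim you would have to additionally prove that $T$ intertwines the decomposition $X\sqcup\coprod_k Y_k$ of $R_n(13/24)$ with the corresponding first-return decomposition of $\Av_n(321)$, compatibly with the sub-bijections; you never do this, and it is roughly as much work as a direct proof. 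There is also a circularity problem internal to the paper: the recursion for $I_n$ is Theorem 5.4, which is \emph{proved from} this lemma, so within the paper's logic it is unavailable here (citing the continued-fraction proof in CEKS13 avoids formal circularity but defeats the paper's purpose, and still gives only equidistribution). And the pointwise version is genuinely needed, since Theorem 5.4 transfers the \emph{joint} distribution with checkpoints through the single map $T$. The paper instead gives a short direct computation that your sketched ``reconstruction bookkeeping'' alternative should be replaced by: in a $321$-avoiding permutation every inversion has its larger entry at a left-to-right maximum, so $\inv(T(w))=\sum_{i\text{ a LRM}}(a_i-i)$, while $\spread(w)=\sum_{i\text{ a first}}(\last(w_i)-i)$; since $T$ sends first occurrences to LRM positions and last occurrences to LRM values, both sums equal $\bigl(\sum_{v_i=1}i\bigr)-\bigl(\sum_{p_j=1}j\bigr)$, and the identity follows with no recursion at all.
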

\begin{proof}
Throughout this proof, let $(p_1\dots p_n,v_1\dots v_n)$ denote the corresponding ballot pair of $w$ and $T(w)$.

For the first assertion, note that a LRM in an RGF is equivalently an index of a first occurrence of some letter. Thus 
$$
\spread(w) = \sum_{i\text{ a LRM of }w}(\last(w_i)-i).
$$
Similarly, as $T(w)=a_1\dots a_n$ is a $321$-avoiding permutation we have 
$$
\inv(T(w)) = \sum_{i\text{ a LRM of }T(w)}(a_i - i).
$$
This last equality is not necessarily trivial unless one has experience with permutation patterns; it follows from the fact that in a $321$-avoiding permutation, the letters at indices which are not $\LRM$ must form a strictly increasing sequence. 

Since the bijection $T$ exchanges the set of last occurrences in $w$ with the set of LRM values in $T(w)$, both sums are equal to 
$$ \left(\sum_{v_i = 1}i\right) - \left(\sum_{p_j = 1}j\right).$$

For the second assertion, we simply observe that both $\block(w)$ and $\LRM(T(w))$ count the number of $1$s in $p_1\dots p_n$. 

For the third assertion, we claim that $w_i$ is a checkpoint if and only if $p_i = v_i=1$ and the prefix pair $(p_1\dots p_{i-1}, v_1\dots v_{i-1})$ is a ballot pair. Indeed the first condition is implied by $w_i$ being unique; the condition that $w_i$ is a checkpoint is equivalent to having $\last(l)<i$ for every letter $l<w_i$, which implies the second. Translating this to permutations, the fact that $p_i = v_i = 1$ implies that the index $i$ is a LRM in $T(w)$, and that the letter $i$ is a value of a LRM in $T(w)$. The prefix condition assures us that these conditions imply $a_i = i$.

Finally, in a $321$-avoiding permutation the letters at indices which are not left-to-right maxima must be strictly increasing. In particular, $a_i>a_{i+1}$ if and only if $i$ is a left-to-right maxima, but $i+1$ is not. Thus the descents in $T(w)$ are precisely the indices $i$ such that $p_i = 1$ and $p_{i+1}=0$. Thus in $w$, $w_i$ is the first occurence of a letter, and $w_{i+1}$ is not. This implies $w_{i+1}\leq w_i$ by the growth restrictions of RGFs, i.e. $i$ must be an apex. 
\end{proof}
We now establish the main results of this section. Theorem 5.4 below should be compared to Theorem 8.4 of \cite{CEKS13}, which is proved using continued fractions. 
\begin{theorem}[\cite{CEKS13} Theorem 8.4]
The polynomials $I_n(q,t,x)$ satisfy $I_0(q,t,x) = 1$ and, for $n\geq 1$, 
$$
I_n(q,t,x) = tx I_{n-1}(q,t,x) + \sum_{j=2}^nq^{j-1}I_{j-2}(q,t,1)\big(I_{n-j+1}(q,t,x) - t(x-1)I_{n-j}(q,t,x)\big).
$$
\end{theorem}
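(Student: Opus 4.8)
The plan is to transport the entire computation to the side of noncrossing partitions via the bijection $T$ of Lemma 5.2 together with the statistic dictionary of Lemma 5.3. Since $\spread(w)=\inv(T(w))$, $\block(w)=\LRM(T(w))$, and the checkpoints of $w$ are exactly the fixed points of $T(w)$, summing over $w\in R_n(13/24)$ yields
$$
I_n(q,t,x)=\sum_{w\in R_n(13/24)}q^{\spread(w)}t^{\block(w)}x^{\cp(w)},
$$
where $\cp(w)$ counts the checkpoints of $w$. It therefore suffices to prove the stated recursion for this partition generating function, and I would do so by reusing verbatim the decomposition $R_n(13/24)=X\sqcup\coprod_{k\ge 2}Y_k$ from the proof of Theorem 3.3. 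The behaviour of $\spread$ and $\block$ under the bijections $X\to R_{n-1}(13/24)$ and $Y_k\to R_{k-2}(13/24)\times R_{n-k+1}(13/24)$ is already recorded there, so the only genuinely new work is tracking the checkpoint statistic.

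For the set $X$, where $w=1(u+1)$, I would first observe that the leading $1$ is always a checkpoint (every later letter of $1(u+1)$ is at least $2$), and that an index $i\ge 2$ is a checkpoint of $w$ if and only if $i-1$ is a checkpoint of $u$, since incrementing all letters and prepending a unique smallest letter preserves both the left-to-right maxima condition and the ``smaller than everything to the right'' condition. Hence $\cp(w)=\cp(u)+1$, and the contribution of $X$ is exactly $tx\,I_{n-1}(q,t,x)$, matching the first term of the recursion.

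The main obstacle is the set $Y_k$, where $w=1(u+1)1v$. The crucial structural fact is that the repeated $1$ at position $k$ forces every checkpoint of $w$ to lie strictly to the right of position $k$: no index $i\le k$ can satisfy $w_i<w_j$ for all $j>i$, because $w_k=1$. Consequently the first block $u$ contributes no checkpoints, so its factor is evaluated at $x=1$ and produces precisely $I_{k-2}(q,t,1)$. Comparing a checkpoint $i>k$ of $w$ with the standardized word $\st(1v)$, and using that standardization is order-preserving and that the non-$1$ letters of $v$ all exceed $\max(u)+1$, I would show that these checkpoints correspond bijectively to the checkpoints of $\st(1v)$ at positions other than the first. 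The sole discrepancy is position $1$ of $\st(1v)$: it is a checkpoint of $\st(1v)$ exactly when $v$ contains no $1$, i.e. when $\st(1v)$ has a unique $1$, yet it never corresponds to a checkpoint of $w$. Thus $\cp(w)=\cp(\st(1v))-[\,\st(1v)\text{ has a unique }1\,]$.

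It remains to package this single-position correction into the factor $I_{n-k+1}(q,t,x)-t(x-1)I_{n-k}(q,t,x)$, which is where the Inclusion-Exclusion Principle enters. Writing the inner sum over $s\in R_{n-k+1}(13/24)$ and separating words with a unique $1$ from the rest, the correction subtracts from the full generating function $I_{n-k+1}(q,t,x)$ the quantity $(x-1)\sum q^{\spread(s)}t^{\block(s)}x^{\cp(s)-1}$, the sum taken over those $s$ with a unique $1$. But words with a unique $1$ are exactly those of the form $1(u'+1)$, the length-$(n-k+1)$ analogue of $X$; by the $X$-analysis above they satisfy $\cp(s)-1=\cp(u')$, $\block(s)=\block(u')+1$, and $\spread(s)=\spread(u')$, so that this sum equals $t\,I_{n-k}(q,t,x)$. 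Hence $Y_k$ contributes $q^{k-1}I_{k-2}(q,t,1)\big(I_{n-k+1}(q,t,x)-t(x-1)I_{n-k}(q,t,x)\big)$. Summing over $k=2,\dots,n$ and adding the contribution of $X$ gives the stated recursion. I expect the bookkeeping of the position-$1$ discrepancy, rather than any deep idea, to be the delicate part, and I would verify the small base cases $k=n$ and $k=2$ to confirm the inclusion-exclusion is correctly normalized.
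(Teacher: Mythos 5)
Your proposal is correct and follows essentially the same route as the paper's own proof: the same reduction to noncrossing partitions via Lemmas 5.1--5.3, the same $X\sqcup\coprod_{k\geq 2}Y_k$ decomposition inherited from Theorem 3.3, and the same Inclusion-Exclusion correction over the words with a unique $1$ (your $1(u'+1)$ analysis is exactly the paper's bijection $V_k\to R_{n-k}(13/24)$, yielding the $t(x-1)I_{n-j}$ term). The only cosmetic difference is that you absorb the correction as $\cp(w)=\cp(\st(1v))-[\,\st(1v)\text{ has a unique }1\,]$ and expand directly, where the paper phrases it as a $(1-x^{-1})$ multiplier; these are algebraically identical.
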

\begin{proof}
By the previous three lemmas, it suffices to work with the distribution of spread, block and checkpoints on noncrossing partitions of length $n$, defining 
\begin{align*}
    \cp(w) &= \#\{i:i\text{ is a checkpoint in }w\}.
\end{align*}
Our goal is to compute the generating function 
$$
\sum_{w\in R_n(13/24)}q^{\spread(w)}t^{\block(w)}x^{\cp(w)}.
$$
To do so, we use the recursive argument developed in Theorem 3.3. Recall the definition of the sets 
$$
X = \{w_1\dots w_n \in R_n(13/24): w_i>1\text{ for }i>1\}
$$
and, for $i=2,3,\dots,n$, 
$$
Y_k = \{w_1\dots w_n\in R_n(13/24): w_k = 1\text{ and }w_j>1 \text{ for }1<j<k\}.
$$
As before, $X$ is in bijection with $R_{n-1}(13/24)$, with the map given by 
$$
u = u_1\dots u_{n-1} \to 1(u+1).
$$
Examining the behavior of the three statistics of interest under this map yields
$$
\sum_{w\in X}q^{\spread(w)}t^{\block(w)}x^{\cp(w)} = t\cdot x\cdot I_{n-1}(q,t,x).
$$

Next, for $w\in Y_k$ let us write $w = 1(u+1)1v$, with $u\in R_{k-2}(13/24)$ and $\st(1v)\in R_{n-k+1}(13/24)$. As in Theorem 3.3, $\spread(w) = \spread(u)+\spread(\st(1v))+k-1$ and $\block(w) = \block(u)+\block(\st(1v))$. The checkpoint statistic is slightly more subtle;  the relation of $\cp(w)$ to $\cp(u)$ and $\cp(\st(1v))$ depends on whether or not there is a $1$ in the word $v$. 

We will get around the previous issues by an application of the Inclusion-Exclusion Principle. Let $V_k$ be the set
\begin{align*}
    V_k &= \{w\in R_{n-k+1}: w\text{ contains a single 1 }\}.
\end{align*}
Writing $w\in Y_k$ as $w=1(u+1)1v$ induces a bijection between $Y_k$ and the disjoint union
$$
(R_{k-2}\times (R_{n-k+1}\setminus V_k)) \coprod (R_{k-2}\times V_k).
$$
If $w = 1(u+1)1v$, then $\cp(w) = \cp(\st(1v))$ if $\st(1v)\in R_{n-k+1}\setminus V_k$, and $\cp(w) = \cp(\st(1v))-1$ if $\st(1v)\in V_k$. This is because if $\st(1v)$ has a unique $1$ (necessarily at the first index), then the index $1$ will be a checkpoint in $\st(1v)$ but will no longer be a checkpoint in $1(u+1)1v$.

A variant of Inclusion Exclusion gives that the generating function for the three statistics over $Y_k$ factors as a product of the polynomial
$$
q^{k-1}\left(\sum_{u\in R_{k-2}(13/24)}q^{\spread(u)}t^{\block(u)}\right)
$$
with 
$$
\left(\sum_{\st(1v)\in R_{n-k+1}(13/24)}q^{\spread(\st(1v))}t^{\block(\st(1v))}x^{\cp(\st(1v))} - (1-x^{-1}) \sum_{\st(1v)\in V_k}q^{\spread(\st(1v))}t^{\block(\st(1v))}x^{\cp(\st(1v))} \right).
$$
In other words, we have taken a naive count over all $u\in R_{k-2}$ and $\st(1v)\in R_{n-k+1}$, and then modified it with the appropriate correction where it is needed (i.e. with respect to $V_k$).

But by an argument that is now standard, we can put $V_k$ in bijection with $R_{n-k}$ by sending $w\in R_{n-k}$ to $1(w+1)$. Examining how our statistics are affected by this map, we obtain
$$
\sum_{w\in Y_k}q^{\spread(w)}t^{\block(w)}x^{\cp(w)} = q^{k-1}I_{k-2}(q,t,1)\big(I_{n-k+1}(q,t,x) - t(x-1)I_{n-k}(q,t,x)\big).
$$
Summing over $k$ completes the proof.
\end{proof}

Of course, upon specialization of variables we recover, for instance, Theorem 1.1 of \cite{CEKS13} as well, which provides a recursion for inversions and left-to-right maxima. 

We end by showing how to mildly generalize part of Theorem 6.2 of \cite{CEKS13}, which examines descents and the major index. Cheng, Elizalde, Kasraoui, and Sagan prove their theorem using the theory of polyominoes. Our purpose in reexamining this computation is merely to continue exploring the connection between noncrossing partitions and $321$-avoiding permutations.

\begin{theorem}
We have $M_0(q,t,x) = 1$ and, for $n\geq1$,
$$
M_n(q,t,x) =xM_{n-1}(q,qt,x)+\sum_{k=2}^n \Big(M_{k-1}(q,t,x)+x(q^{k-1}t-1)M_{k-2}(q,t,x)\Big)M_{n-k}(q,q^{k}t,x).
$$
\end{theorem}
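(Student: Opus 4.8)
The plan is to transport everything to noncrossing RGFs via the bijection $T$ of Lemma 5.2 together with the dictionary of Lemma 5.3. Writing $\ap(w)$ for the number of apices of an RGF $w$ and setting $\maj(w)=\sum_{i\text{ an apex of }w}i$, Lemma 5.3 gives $\des(T(w))=\ap(w)$ and $\maj(T(w))=\maj(w)$ (the descents of $T(w)$ sit exactly at the apices of $w$), while $\LRM(T(w))=\block(w)$. Hence
$$M_n(q,t,x)=\sum_{w\in R_n(13/24)}q^{\maj(w)}t^{\ap(w)}x^{\block(w)},$$
and it suffices to prove the recursion for this sum. Throughout I would use the elementary fact that prepending $j$ positions in front of an apex at index $i$ moves its contribution from $q^{i}$ to $q^{i+j}$; this is the source of the substitutions $t\mapsto q^{a}t$ in the statement.

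I would decompose $R_n(13/24)$ according to $k:=\last(1)$, the index of the final occurrence of the letter $1$. The block $k=1$ consists of the words $w=1(u+1)$ with $u\in R_{n-1}(13/24)$ (the set $X$ of Theorem 3.3). Prepending the unique $1$ raises $\block$ by one, shifts every apex of $u$ up by one index, and creates no apex at the front since $1<w_2$; thus $\block(w)=\block(u)+1$, $\ap(w)=\ap(u)$ and $\maj(w)=\maj(u)+\ap(u)$, and summing gives the contribution $x\,M_{n-1}(q,qt,x)$, the first term of the recursion.

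For $k\geq 2$ I would split $w=AB$ with $A=w_1\cdots w_k$ and $B=w_{k+1}\cdots w_n$. The key structural point is that $B$ uses only ``fresh'' values: applying the last characterization in Lemma 3.1 to the equal letters $w_1=w_k=1$ forces $w_{j'}>\max(A)$ for every $j'>k$, since no further $1$ occurs. Consequently $B$ standardizes to an independent $\bar B=\st(B)\in R_{n-k}(13/24)$ whose LR-maxima, apices and block are read off unchanged inside $w$, with every apex index of $\bar B$ shifted up by $k$; this produces the clean right factor $M_{n-k}(q,q^{k}t,x)$. It also reduces the left factor to $\sum_{A}q^{\maj(A)}t^{\ap(A)}x^{\block(A)}$ over $\mathcal{A}_k:=\{A\in R_k(13/24):A_k=1\}$, the standalone statistics of $A$ agreeing with those inside $w$ because $w_k=1$ is never an LR-maximum. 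To evaluate this I would use the bijection $\mathcal{A}_k\to R_{k-1}(13/24)$ deleting the terminal $1$; re-appending a $1$ is always legal, as a noncrossing word beginning with $1$ contains no subword $x\,1\,x$ with $x\geq 2$ (such a subword together with the initial $1$ would form a $1212$ pattern). Under $A=A'1$ one has $\block(A)=\block(A')$, and the only change in apices is that the last index of $A'$ becomes an apex of $A$ precisely when $A'$ ends at a new maximum. Isolating that subset and peeling off its terminal new maximum identifies it with $R_{k-2}(13/24)$, contributing $x\,M_{k-2}(q,t,x)$, so the left factor collapses to $M_{k-1}(q,t,x)+x(q^{k-1}t-1)M_{k-2}(q,t,x)$. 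Summing the products over $k=2,\dots,n$ and adding the $k=1$ term finishes the argument.

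The main obstacle is the bookkeeping at the junction of $A$ and $B$: one must verify that the terminal $1$ of $A$ contributes neither a block nor an apex, and must correctly credit the single apex that the junction can create at the last index of $A'$. Making the inclusion--exclusion over ``words ending at a new maximum'' collapse exactly into the $x(q^{k-1}t-1)M_{k-2}$ correction, while keeping the position-shift substitutions $qt$ and $q^{k}t$ consistent, is where the real care is needed. The fresh-values claim from Lemma 3.1 is the linchpin, as it is what lets the right factor decouple cleanly into $M_{n-k}(q,q^{k}t,x)$.
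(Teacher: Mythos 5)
Your proof is correct and takes essentially the same route as the paper's: you decompose $R_n(13/24)$ by the index $k$ of the last occurrence of the letter $1$, biject the $k\geq 2$ piece with $R_{k-1}(13/24)\times R_{n-k}(13/24)$ (your deletion of the terminal $1$ from $A$ is exactly the paper's map $(u,v)\mapsto u1(v+\max(u)+1)$ read backwards), and apply the same inclusion--exclusion over words ending in a new maximum (the paper's ``$u$ ends in a unique letter'') to produce the correction $x(q^{k-1}t-1)M_{k-2}(q,t,x)$, with identical handling of the apex shifts behind the substitutions $t\mapsto qt$ and $t\mapsto q^k t$. The only difference is expository: you verify details the paper explicitly leaves as a sketch, such as the freshness of all letters after position $k$ via the last characterization in Lemma 3.1 and the legality of re-appending the terminal $1$.
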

\begin{proof}
Define 
$$
\ap(w) = \#\{i:i\text{ is an apex in }w\}
$$
and 
$$
\maj(w) = \sum_{i \text{ an apex }} i.
$$
It suffices to determine the distribution of apices, major index, and block over $R_n(13/24)$, as by Lemma 5.3 we have
$$
\sum_{w\in R_n(13/24)}q^{\maj(w)}t^{\ap(w)}x^{\block(w)}= M_n(q,t,x).
$$ This proof is similar in spirit to Theorems 3.3 and 5.4, but requires a different recursive argument. The ideas should be familiar by now, so we merely sketch the ideas. Partition $R_n(13/24)$ into the sets 
$$
R_n(13/24) = \coprod_{k=1}^nY_k,
$$
with 
$$
Y_k:= \{w\in R_n(13/24) : \text{ the last occurrence of the letter }1\text{ in }w\text{ has index }k\}.
$$
Similarly to the proof of Theorem 3.3, $Y_1$ is in bijection with $R_{n-1}(13/24),$ and 
$$
\sum_{w\in Y_1}q^{\maj(w)}t^{\ap(w)}x^{\block(w)}= xM_{n-1}(q,qt,x).
$$
The sets $Y_k$, for $k\geq 2$, are in bijection with the Cartesian products
$$
Y_k\leftrightarrow R_{k-1}(13/24)\times R_{n-k}(13/24);
$$
the map exhibiting this sends $(u,v)\in R_{k-1}(13/24)\times R_{n-k}(13/24)$ to $w= u1(v+\max(u)+1)$. Any index which is an apex in $u$ or $v$ will promote to an apex of $w$; additionally, if $u$ ends in a unique letter, then this will provide an additional apex of $w$ which was not an apex of $u$. Accordingly,
$$
\ap(w) = \begin{cases}\ap(u)+\ap(v)+1\text{ if }u\text{ ends in a unique letter,}\\
\ap(u)+\ap(v) \text{ otherwise}.
\end{cases}
$$
Keeping track of the position of these apices gives 
$$
\maj(w) = \begin{cases}\maj(u)+\maj(v)+k\ap(v)+k-1\text{ if }u\text{ ends in a unique letter,}\\
\maj(u)+\maj(v)+k\ap(v) \text{ otherwise}.\end{cases}
$$
Applying the same Inclusion-Exclusion argument as in Theorem 5.4 and summing over $k$ gives the desired result. 
\end{proof}
Specializing the variable $x=1$ yields the first recursive formula presented in Theorem 6.2 of \cite{CEKS13}. It would be interesting if a clean recursion could be found which combines Theorems 5.4 and 5.5, i.e. to compute the joint distribution of our five statistics of interest over the $321$-avoiding permutations. However, it is not clear how to do so neatly; if one recurses on the index of the second letter $1$ in $R_n(13/24)$, it becomes hard to keep track of apices. Alternatively, it is difficult to keep track of the spread statistic when one recurses on the last occurrence of the letter $1$. In principle one could do so, but it is unclear if the resulting formula can be simplified into anything worth looking at. 
\section{Future Directions}
As is evident, many interesting connections in combinatorics can be found by studying a combination of combinatorial statistics and combinatorial patterns. We end with several ideas one could examine in this area. 

\textbf{Longer Patterns:} The most obvious extension would could make to this article is to continue studying the distribution of these statistics over avoidance classes of longer patterns. For example, Sagan provides closed formulae for the number of partitions avoiding $12/3/\dots /m$ and for the number of partitions avoiding $1/23\dots m$ in \cite{Sag10}. Can one generalize our arguments to those settings? 

\textbf{Other Classes of Partitions:} There are several other natural classes of set partitions, which are not defined via the notion pattern avoidance defined above. What can one say about the distribution of dimension, spread, and block over these classes? For example, one could work with the notion of pattern avoidance in terms of restricted growth functions as is done in \cite{REURGF}. In this context, the generating function for the dimension index taken over RGFs avoiding the pattern $112$ is a sum of Gaussian binomial coefficients. It would be interesting to explore this in detail.

Alternatively, one could work with other combinatorially defined sets of partitions, such as the nonnesting partitions. For an introduction to such objects and their relation to noncrossing partitions, see, for instance, \cite{AST13}.

\textbf{Machine Learning:} Can machine learning be used to examine combinatorial patterns, in any context? Such computations have found use in computational algebraic geometry and theoretical physics \cite{MLearn}; analogs in the combinatorial setting could be useful in further developing combinatorial databases, such as Tenner's Database of Permutation Pattern Avoidance \cite{DPPA}.

\textbf{Poset Structure:}
Sagan proposed a version of these questions in \cite{Sag10}; we repeat the topic here, in the hopes of someone looking into it! Let $\Pi = \bigcup_n \Pi_n$. In analog to the permutation pattern poset, we can put a partial order on $\Pi$ by saying $\sigma\leq \pi$ if $\pi$ contains $\sigma$ as a pattern. What can we say about this poset? The topology of the permutation pattern poset has received much interest \cite{top}; what can be said of the topology of the set partition pattern poset? The permutation pattern poset contains infinite antichains \cite{SB00,ABV13}; does the set partition pattern poset?

\textbf{Connections to Permutations:}
With Section 5 in mind, can one find more connections between pattern avoidance in set partitions and pattern avoidance in permutations (or, more generally, pattern avoidance in other contexts)? Following a comment of Kyle Petersen on OEIS entry A055151, a potential start would be to connect Theorem 4.3 to descents and peaks in 231-avoiding permutations. Alternatively, one could try to relate partitions in $\Pi_n(123,13/24)$ to permutations avoiding $321$ and the so called \emph{barred pattern} $3\overline{1}24$; see \cite{Barred} for more information.

\bibliography{spref}{}
\bibliographystyle{alpha}
\nocite{}
\end{document}